\documentclass[a4paper, reqno]{amsart}

\usepackage[utf8]{inputenc}
\usepackage[T1]{fontenc}
\usepackage{mathtools}
\usepackage{amssymb}
\usepackage{enumitem}
\usepackage{tikz}
\usepackage{mathtools}
\usepackage{a4wide}
\usepackage{stmaryrd}

\usetikzlibrary{decorations.pathreplacing, patterns}
\usepackage[outline]{contour}
\contourlength{1.2pt}
\usepackage{hyperref}

\newcommand{\TODO}[1]%
{\par\fbox{\begin{minipage}{0.9\linewidth}\textbf{TODO:} #1\end{minipage}}\par}

\newtheorem{theorem}{Theorem}

\newtheorem{proposition}{Proposition}[section]
\newtheorem{corollary}[proposition]{Corollary}

\newtheorem{lemma}[proposition]{Lemma}
\theoremstyle{definition}
\newtheorem{definition}[proposition]{Definition}

\newtheorem{remark}[proposition]{Remark}

\DeclareMathOperator{\Int}{Int}

\DeclarePairedDelimiter{\norm}{\lVert}{\rVert}
\DeclareMathOperator{\mspec}{max-spec}
\newcommand{\maxspec}[1]{\mspec(#1)}

\newcommand{\calB}{\mathcal{B}}
\newcommand{\calP}{\mathcal{P}}
\newcommand{\calPone}{\mathcal{P}_1}
\newcommand{\calPtwo}{\mathcal{P}_2}

\newcommand{\calR}{\mathcal{R}}
\newcommand{\calS}{\mathcal{S}}
\newcommand{\calT}{\mathcal{T}}

\newcommand{\fixdiv}{\mathsf{d}}
\newcommand{\val}{\mathsf{v}}

\newcommand{\N}{\mathbb{N}}

\newcommand{\Z}{\mathbb{Z}}

\author{Sophie Frisch}
\address{Institut f\"ur Analysis und Zahlentheorie\\
  Graz University of Technology\\
  Kopernikusgasse 24\\8010 Graz\\Austria}
\email{\href{mailto:frisch@math.tugraz.at}{frisch@math.tugraz.at}}
\thanks{S.~Frisch is supported by the Austrian Science Fund (FWF):
P~30934}

\author{Sarah Nakato}
\address{Institut f\"ur Analysis und Zahlentheorie\\
  Graz University of Technology\\
  Kopernikusgasse 24\\8010 Graz\\Austria}
\email{\href{mailto:snakato@tugraz.at}{snakato@tugraz.at}}
\thanks{S.~Nakato is supported by the Austrian Science Fund (FWF):
P~27816}

\author{Roswitha Rissner}
\address{Institut f\"ur Mathematik\\Alpen-Adria-Universit\"at Klagenfurt\\
  Universit\"atsstra{\ss}e 65-67\\9020 Klagenfurt am W\"orthersee\\Austria}
\email{\href{mailto:roswitha.rissner@aau.at}{roswitha.rissner@aau.at}}
\thanks{R.~Rissner is supported by the Austrian Science Fund (FWF):
P~26114 and P~28466}

\title[Sets of lengths]{Sets of lengths of factorizations of
  integer-valued polynomials on Dedekind domains with finite residue
  fields}

\keywords{factorizations, sets of lengths, integer-valued polynomials,
  Dedekind domains, block monoid, transfer homomorphism, Krull monoid,
  monadically Krull monoid}

\subjclass[2010]{13A05; 13B25, 13F20, 11R04, 11C08}

\begin{document}

\begin{abstract}
  Let $D$ be a Dedekind domain with infinitely many maximal ideals,
  all of finite index, and $K$ its quotient field.  Let
  $\Int(D) = \{f\in K[x] \mid f(D) \subseteq D\}$ be the ring of 
  integer-valued polynomials on $D$.

  Given any finite multiset $\{k_1, \ldots, k_n\}$ of integers greater
  than $1$, we construct a polynomial in $\Int(D)$ which has exactly
  $n$ essentially different factorizations into irreducibles in
  $\Int(D)$, the lengths of these factorizations being $k_1$, \ldots,
  $k_n$.  We also show that there is no transfer homomorphism from the
  multiplicative monoid of $\Int(D)$ to a block monoid.
\end{abstract}

\maketitle

\section{Introduction}

By factorization we mean an expression of an element of a ring as a
product of irreducible elements.  Until not so long ago, the fact that
such a factorization, if it exists, need not be unique, was seen as a
pathology.
When mathematicians were shocked to find that uniqueness of
factorization does not hold in rings of integers in number fields,
they did not immediately study the details of this non-uniqueness, 
but moved on to unique factorization of ideals into prime ideals.
Non-uniqueness of factorization was avoided, whenever possible.

Only in the last few decades, some mathematicians, notably
Geroldinger and Halter-Koch~\cite{GeroldingerHalter:2006:NUF}, 
came around to the view that the precise details of non-uniqueness of
factorization  actually are a fascinating topic: the underlying 
phenomena give a lot of information about the arithmetic of a ring.

One important object of study is the set of lengths of factorizations
of a fixed element, cf.~\cite{Geroldinger:2016:sets-of-lengths}. 
The length of a factorization is the number of irreducible factors, and 
the set of lengths of an element is the set of all natural numbers that 
occur as lengths of factorizations of the element. 
Geroldinger and Halter-Koch \cite{GeroldingerHalter:2006:NUF} found that 
the sets of lengths of algebraic integers exhibit a certain structure.

In stark contrast to this, we show in Section~\ref{sec:proof} that every
finite set of natural numbers not containing $1$ occurs as the set of 
lengths of a polynomial in the ring of integer-valued polynomials on $D$,
\begin{equation*}
  \Int(D)= \{f\in K[x]\mid f(D)\subseteq D\},
\end{equation*}
where $D$ is a Dedekind domain with infinitely many maximal ideals,
all of them of finite index, and $K$ denotes the quotient field of $D$.
The special case of $D=\Z$ has been shown by 
Frisch~\cite{Frisch:2013:prescribed-sets}.

The study of non-uniqueness of factorization has mostly concentrated 
on Krull monoids so far. Krull monoids are characterized by having a 
``divisor theory''.
The multiplicative monoid $D\setminus \{0\}$ of an integral domain $D$
is Krull exactly if $D$ is a Krull ring,
cf.~\cite{GeroldingerHalter:2006:NUF}. 

The rings $\Int(D)$ for which we study non-uniqueness of factorization
are not Krull, but Pr\"ufer, 
cf.~\cite{CahenChabertFrisch:2000:interpolation, Loper:1998:intdpruefer}.
All factorizations of a single polynomial in $\Int(D)$, however, take 
place in a Krull monoid, namely, in the divisor-closed submonoid of
$\Int(D)$ generated by $f$.

Following Reinhart \cite{Reinhart:2014:monadic}, we call this monoid,
consisting of all divisors in $\Int(D)$ of all powers of $f$,
the {\em monadic submonoid generated by $f$}. 
That all monadic submonoids of $\Int(D)$ are Krull was
shown by Reinhart \cite{Reinhart:2014:monadic} in the case where $D$
is a unique factorization domain, and, by a different method, by Frisch
\cite{Frisch:2016:monadic} in the case where $D$ is a Krull ring.
Thus, our Theorem~\ref{thm:prescribed-lengths}, concerning non-unique
factorization in the Pr\"ufer ring $\Int(D)$, also serves to show
that quite wild factorization behavior is possible in Krull monoids.

Among Krull monoids, the best studied ones are multiplicative monoids 
of rings of algebraic integers. We should keep in mind, however, that 
the multiplicative monoids of rings of algebraic integers are very 
special, in that unique factorization of ideals always lurks in the 
background. In technical terms this means that there is a transfer 
homomorphism to a block monoid.

In Section~\ref{sec:transfer}, we show that there is no transfer
homomorphism to a block monoid from the multiplicative monoid of
$\Int(D)$.  This is relevant for two reasons: Firstly, because the
rings of whose multiplicative monoid it is known that it does not
admit such a transfer homomorphism are few and far between,
see~\cite{FanTringali:2017:power-monoids,
  GeroldingerSchmidZhong:2017:sets-lengths,
  GeroldingerSchwab:2017:ucfp}; and secondly, because most, if not
all, results so far concerning arbitrary finite sets occurring as sets
of lengths have been obtained using transfer homomorphisms to block
monoids~\cite{Kainrath:1999:FKM}.

Our main results are in Sections~\ref{sec:proof} and~\ref{sec:transfer}; 
in Section~\ref{sec:preliminaries} we introduce the necessary notation 
and Section~\ref{sec:aux-lemma} contains some useful lemmas.

\section{Preliminaries}
\label{sec:preliminaries}

We start with a short review of some elementary facts on 
factorizations, Dedekind domains and integer-valued polynomials,
and introduce some notation.

\subsection*{Factorizations}

We define here only the notions that we need throughout this paper, 
and refer to the monograph by 
Geroldinger and Halter-Koch~\cite{GeroldingerHalter:2006:NUF}
for a systematic introduction to non-unique factorizations.

Let $R$ be a commutative ring with identity and $r$, $s \in R$.
\begin{enumerate}
 \item If $r$ is a non-zero non-unit, we say $r$ is \emph{irreducible}
 in $R$ if it cannot be written as the product of two non-units of $R$.
 \item A \emph{factorization} of $r$ in $R$ is an expression
       \begin{equation}
       \label{eq:fac} r = a_{1}\cdots a_{n}
       \end{equation} 
       where $n\ge 1$ and $a_i$ is irreducible in $R$ for $1\le i \le n$.
 \item The number $n$ of irreducible factors is called the
 \emph{length} of the factorization in~\eqref{eq:fac}.
 \item The \emph{set of lengths} of $r$ is the set of all natural
 numbers $n$ such that $r$ has a factorization of length $n$.
 \item We say $r$ and $s$ are \emph{associated} in $R$ if there exists
 a unit $u \in R$ such that $r = us$. We denote this by $r \sim s$.
 \item Two factorizations of the same element,
       \begin{equation}\label{eq:2-fac-same-diff}
       r = a_{1}\cdots a_{n} = b_{1} \cdots b_{m},
       \end{equation} 
 are called \emph{essentially the same} if $n = m$ and, after reindexing, 
 $a_{j}\sim b_{j}$ for $1 \leq j \leq m$. 
 If this is not the case, the factorizations in
 \eqref{eq:2-fac-same-diff} are called \emph{essentially different}.
\end{enumerate}

\subsection*{Dedekind domains}

Recall that an integral domain $D$ is a \emph{Dedekind domain} if and
only if every non-zero ideal is a product of prime ideals. This is
equivalent to every non-zero ideal being invertible. It is also
equivalent to $D$ being a Noetherian domain such that the localization
at every non-zero maximal ideal is a discrete valuation domain. And it
is further equivalent to the following list of properties
\begin{enumerate}
\item $D$ is Noetherian
\item $D$ is integrally closed
\item $\dim(D) \le 1$
\end{enumerate}

From now on, we only consider Dedekind domains that are not fields.
For a Dedekind domain $D$ with quotient field $K$, let $\maxspec{D}$
denote the set of maximal ideals of $D$. Every prime ideal
$P\in\maxspec{D}$ defines a discrete valuation $\val_P$ by
$\val_P(a) = \max\{n\in \Z \mid a \in P^n\}$ for
$a\in K\setminus \{0\}$.
$\val_P$ is called the \emph{$P$-adic valuation} on $K$.

For a non-zero ideal $I$ of $D$, let
$\val_P(I) = \min\{\val_P(a) \mid a\in I\}$. This is compatible with
the definition of $\val_P(a)$ for $a\in K\setminus\{0\}$, in the sense 
that $\val_P(aD) = \val_P(a)$.
With this notation, the factorization of $I$ into prime ideals is
\begin{equation}\label{eq:dedekind-ideal-fac}
  I = \prod_{P\in\maxspec{D}} P^{\,\val_P(I)}
\end{equation}

Note that $\val_P(I) > 0$ is equivalent to $I\subseteq P$. There are 
only finitely many prime overideals of $I$ in $D$ and hence the product 
in Equation~\eqref{eq:dedekind-ideal-fac} is finite.

For two ideals $I$ and $J$ of $D$, $I \subseteq J$ is equivalent to
$\val_P(J) \le \val_P(I)$ for all $P\in \maxspec{D}$. 
Note that $I \subseteq J$ is equivalent to the fact that there exists 
an ideal $L$ of $D$ such that $JL=I$, in which case we say that $J$ 
divides $I$ and write $J \mid I$. This last equivalence is often 
summarized as ``to contain is to divide.''

For a thorough introduction to Dedekind domains, we refer to
Bourbaki~\cite[Ch.~VII,~§~2]{Bourbaki:1989:comm-alg}.

\subsection*{Dedekind domains with finite residue fields}

Let $D$ be a Dedekind domain. For a maximal ideal $P$ with finite
residue field we write $\norm{P}$ for $|D/P|$ and call this number the
\emph{index of $P$}. In what follows we will only consider Dedekind
rings with infinitely many maximal ideals, all of whose residue fields
are finite. We will frequently use the fact that there are only
finitely many maximal ideals of each individual finite index. This holds
in every Noetherian domain, as Samuel~\cite{Samuel:1971:eucl-rings} 
has shown; see also~Gilmer~\cite{Gilmer:1995:zero-dim-prods}.

We include a short proof by F.~Halter-Koch for the special case of
Dedekind domains.

\begin{proposition}[{Samuel~\cite{Samuel:1971:eucl-rings}, 
Gilmer~\cite{Gilmer:1995:zero-dim-prods}}]
  Let $D$ be a Dedekind domain. Then for each given $q\in \N$, there are
  at most finitely many maximal ideals $P$ of $D$ with $\norm{P}=q$.
\end{proposition}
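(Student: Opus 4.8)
The plan is to exhibit, for each fixed $q$, a single nonzero element of $D$ that is contained in \emph{every} maximal ideal $P$ with $\norm{P} = q$. Once such an element is found, say $c$, all these $P$ are prime overideals of the nonzero ideal $cD$ (``to contain is to divide''), and by the remarks preceding the proposition a nonzero ideal has only finitely many prime overideals; hence only finitely many $P$ can have index $q$.

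First I would observe that if $\norm{P} = q$, then $D/P$ is a field with exactly $q$ elements, so every element of $D/P$ is a root of the polynomial $x^q - x$, because the finite field $\mathbb{F}_q$ satisfies $\alpha^q = \alpha$ for all $\alpha$. Translating back to $D$, this means $a^q - a \in P$ for every $a \in D$ and, crucially, simultaneously for \emph{all} maximal ideals $P$ of index $q$. So any $a$ with $a^q - a \neq 0$ supplies the common element $c = a^q - a$.

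It remains to produce one specific $a \in D$ with $a^q - a \neq 0$. Since $D$ is not a field, it is infinite (a finite integral domain is a field), whereas the nonzero polynomial $x^q - x$ has at most $q$ roots in the domain $D$. Hence some $a \in D$ is not a root, and $c = a^q - a$ is a nonzero element lying in every $P$ with $\norm{P} = q$. Each such $P$ therefore divides the nonzero principal ideal $cD$, of which there are only finitely many prime overideals, which completes the argument.

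The step requiring the most care is choosing the common element uniformly across characteristics. In characteristic $0$ one could instead use the rational prime $p$ underlying a prime power $q = p^f$, since then $p \in P$; but in characteristic $p$ this collapses, because $p = 0$ in $D$ and carries no information. The choice $c = a^q - a$ sidesteps the case distinction entirely: it relies only on the finiteness of $D/P$ together with the infiniteness of $D$, and works in every characteristic. (If $q$ is not a prime power there are no maximal ideals of index $q$ at all, so the statement holds vacuously and needs no separate treatment.)
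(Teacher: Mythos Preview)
Your proof is correct and follows essentially the same idea as the paper's proof (attributed to Halter-Koch): exploit the finite-field identity $\alpha^q=\alpha$ in $D/P$ together with the fact that a nonzero element of $D$ lies in only finitely many maximal ideals. The paper phrases this as a contradiction via $a^{q-1}-1$ for nonzero $a$, whereas you argue directly with $a^q-a$; the latter avoids splitting into the cases $a\in P$ and $a\notin P$, but this is a cosmetic streamlining rather than a genuinely different approach.
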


\begin{proof}[Proof (Halter-Koch, personal communication).]

  Suppose that for some $q\ge 2$ there exist infinitely many prime
  ideals of index $q$, and let $0\neq a\in D$. Then there exist
  infinitely many prime ideals $P$ of $D$ such that $\norm{P} = q$ and
  $a \notin P$. For each such prime ideal $P$ we obtain
  $a^{q-1} \equiv 1 \mod P$, hence $a^{q-1} - 1 \in P$ and thus
  $a^{q-1}= 1$. So, every non-zero element of $D$ is a $(q - 1)$-st
  root of unity. Impossible!
\end{proof}

\subsection*{Integer-valued polynomials}
If $D$ is a domain with quotient field $K$, the \emph{ring of
  integer-valued polynomials} on $D$ is defined as
\begin{equation*}
  \Int(D) = \{f \in K[x] \mid f(D) \subseteq D\}.
\end{equation*}

Every non-zero $f \in K[x]$ can be written as a quotient
$f = \frac{g}{b}$ where $g\in D[x]$ and $b\in D\setminus\{0\}$.
Clearly, $f = \frac{g}{b}$ is an element of $\Int(D)$ if and only if
$b \mid g(a)$ for all $a \in D$.

\begin{definition}\label{def:fixdiv}
  Let $D$ be a domain and $g\in \Int(D)$. The \emph{fixed divisor} of
  $g$ is the ideal $\fixdiv(g)$ of $D$ generated by the elements $g(a)$ 
  with $a\in D$:
  \begin{equation*}
    \fixdiv(g) = (g(a) \mid a\in D)
  \end{equation*}
  We say that
  $g$ is \emph{image primitive} if $\fixdiv(g) = D$. By abuse of notation,
  this is also denoted $\fixdiv(g) = 1$.
\end{definition}

\begin{remark}\label{rem:intd-fixdiv}
  Let $D$ be a domain and $K$ its quotient field.
 \begin{enumerate}
 \item If $g\in D[x]$ and $b\in D\setminus\{0\}$, then $\frac{g}{b}$
   is an element of $\Int(D)$ if and only if
   $\fixdiv(g) \subseteq bD$.
   \label{intd-fixdiv-1}
 \item If $g\in D[x]$ and $P$ a prime ideal of $D$ such that
   $\fixdiv(g) \subseteq P$ then $g\in P[x]$ or
   $[D\colon P]\le\deg(g)$.%
   \label{rem:intd-fixdiv-index}
 \item If $f$, $g \in \Int(D)$, then
   $\fixdiv(fg)\subseteq \fixdiv(f)\fixdiv(g)$.
   \label{rem:fixdiv-prod}
 \item If $g\in D[x]$ is irreducible in $K[x]$, then every
   factorization of $g$ in $\Int(D)$ as a product of two (not
   necessarily irreducible) elements is of the form $c\frac{g}{c}$
   with $c\in D$ and $\fixdiv(g)\subseteq cD$.
   \label{rem:constant-factor}
 \item If $g\in D[x]$ is irreducible in $K[x]$ and $\fixdiv(g) = D$,
   then $g$ is irreducible in $\Int(D)$.
   \label{rem:irred-in-IntD}
 \end{enumerate}
\end{remark}

For a general introduction to integer-valued polynomials we refer to
the monograph by Cahen and Chabert~\cite{CahenChabert:1997:IVP} and to
their more recent survey paper~\cite{CahenChabert:2016:ivp-survey}.

\section{Auxiliary results}
\label{sec:aux-lemma}

In this section we develop tools to construct, first, split polynomials
in $D[x]$ with prescribed fixed divisor (Lemma~\ref{lemma:fix-div-val}), 
then, irreducible polynomials in $D[x]$ with prescribed fixed divisor 
(Lemma~\ref{lemma:replacements}), and, finally, polynomials of a
special form whose essentially different factorizations in $\Int(D)$ we
have complete control over (Lemma~\ref{lemma:factorizations}).

\begin{remark}\label{rem:multiset-op}
  In the following, we want to consider the multiplicity of roots of
  polynomials. For this purpose, we introduce some notation for
  multisets. Let $m_S(a)$ denote the multiplicity of an element $a$ in
  a multiset $S$ (with $m_S(a) = 0$ if $a\notin S$). For multisets $S$
  and $T$, let $S\uplus T$ denote the collection of elements $a$ in
  the union of the sets underlying $S$ and $T$ with multiplicities
  $m_{S\uplus T}(a) = m_S(a) + m_T(a)$ (the disjoint union of $S$ and $T$). 
  Note that $|S\uplus T| = |S| + |T|$.
\end{remark}

\begin{lemma}\label{lemma:fix-div-val}
  Let $D$ be a domain, $\calT\subseteq D$ a finite multiset and
  $f = \prod_{r\in \calT}(x-r)$.  If $Q$ is a non-zero prime ideal of
  $D$, then $\fixdiv(f) \subseteq Q$ if and only if $\calT$ contains a
  complete system of residues modulo $Q$.

  Furthermore, if $D$ is a Dedekind domain and
  $\calT = \calT_0  \uplus \biguplus_{i=1}^e\calT_i$ such that:
  \begin{enumerate}
  \item For all $1\leq i\leq e$, $\calT_i$ is a complete system of
    residues modulo $Q$ and the respective representatives of the same
    residue class in each $\calT_i$ are congruent modulo $Q^{2}$,
  \label{item:fdv-1}
  \item There exists $z \in D$ such that for all $s \in \calT_0$,
    $s \not \equiv z \mod Q$,
  \label{item:fdv-2}  
  \end{enumerate}
  then $\val_Q(\fixdiv(f)) = e$.
  
\end{lemma}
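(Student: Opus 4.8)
The plan is to reduce both parts to the formula $\val_Q(\fixdiv(f)) = \min_{a\in D}\val_Q(f(a))$, which holds because $\fixdiv(f)$ is generated by the values $f(a)$ and the $Q$-adic valuation of an ideal equals the minimum of the valuations of any generating set. Since $f = \prod_{r\in\calT}(x-r)$, additivity of $\val_Q$ gives $\val_Q(f(a)) = \sum_{r\in\calT}\val_Q(a-r)$, where each summand $\val_Q(a-r)$ is positive exactly when $a\equiv r\pmod Q$. For the first assertion, $\fixdiv(f)\subseteq Q$ means $\val_Q(f(a))\ge 1$ for every $a\in D$, i.e.\ for each $a$ there is some $r\in\calT$ with $a\equiv r\pmod Q$; this says precisely that $\calT$ meets every residue class modulo $Q$, which is exactly the condition that $\calT$ contain a complete system of residues modulo $Q$.

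For the \emph{furthermore} part I split the sum as $\val_Q(f(a)) = \sum_{r\in\calT_0}\val_Q(a-r) + \sum_{i=1}^e\sum_{r\in\calT_i}\val_Q(a-r)$ and bound each block. Because each $\calT_i$ with $1\le i\le e$ is a complete system of residues modulo $Q$, for a fixed $a$ exactly one $r\in\calT_i$ lies in the residue class of $a$; call it $r_i$. Then $\sum_{r\in\calT_i}\val_Q(a-r) = \val_Q(a-r_i)\ge 1$, the remaining terms vanishing. Hence $\val_Q(f(a))\ge \sum_{i=1}^e 1 = e$ for all $a$, which gives the lower bound $\val_Q(\fixdiv(f))\ge e$.

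It remains to produce a single $a$ attaining the value $e$; this is where conditions (i) and (ii) and the finiteness of the residue field enter. I would search within the residue class of $z$ modulo $Q$ supplied by condition (ii): for any $a\equiv z\pmod Q$, every $s\in\calT_0$ satisfies $s\not\equiv z\equiv a\pmod Q$, so $\val_Q(a-s)=0$ and the $\calT_0$-block contributes $0$. By condition (i), the elements $r_1,\ldots,r_e$ congruent to $z$ modulo $Q$ (one drawn from each $\calT_i$) all share a common residue $\rho$ modulo $Q^2$. The aim is then to choose $a\equiv z\pmod Q$ with $a\not\equiv\rho\pmod{Q^2}$, for then $a-r_i\in Q\setminus Q^2$ and $\val_Q(a-r_i)=1$ for every $i$, whence $\val_Q(f(a))=e$.

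The main obstacle, and the only place where the hypotheses are genuinely needed, is guaranteeing that such an $a$ exists. The preimage in $D/Q^2$ of the class of $z$ modulo $Q$ is a coset of $Q/Q^2$ and hence has $\norm{Q}$ elements, since $Q/Q^2$ is a one-dimensional vector space over $D/Q$; because $D$ is a Dedekind domain that is not a field, $\norm{Q}=|D/Q|\ge 2$, so this coset contains a class different from $\rho$. Choosing $a$ in that class (lifting it back to $D$ by surjectivity of $D\to D/Q^2$) yields $\val_Q(f(a))=e$, so $\val_Q(\fixdiv(f))\le e$, and together with the lower bound this gives $\val_Q(\fixdiv(f))=e$.
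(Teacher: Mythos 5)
Your proof is correct and follows essentially the same route as the paper's: the same lower bound obtained from the blocks $\calT_1,\dots,\calT_e$ each contributing at least $1$, and the same witness $a\equiv z\pmod{Q}$, $a\not\equiv\rho\pmod{Q^2}$ for the upper bound; your justification that such an $a$ exists (the fiber of $D/Q^2\to D/Q$ over the class of $z$ is a coset of the nonzero $D/Q$-vector space $Q/Q^2$) is a correct elaboration of a step the paper merely asserts, and note it nowhere needs $D/Q$ to be finite, only $|D/Q|\ge 2$ and $Q\neq Q^2$. One caveat: the first assertion is stated for an arbitrary domain $D$, where $\val_Q$ need not be defined, let alone satisfy the additivity $\val_Q(f(a))=\sum_{r\in\calT}\val_Q(a-r)$ (this can fail outside Dedekind-type rings); however, all your argument actually uses is that $f(a)\in Q$ if and only if $a-r\in Q$ for some $r\in\calT$, which follows directly from $Q$ being prime, so rephrasing that step in terms of membership in $Q$ rather than valuations repairs it without changing the argument.
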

\begin{proof}  If $\calT$ does not contain a
  complete system of residues modulo $Q$, then there exists an element
  $a \in D$ such that $a \not \equiv r \mod Q$ for all $r \in \calT$.
  This implies $f(a) = \prod_{r\in \calT}(a-r) \not \in Q$, hence
  $\fixdiv(f) \nsubseteq Q$.

  Conversely, if $\calT$ contains a complete system of
  residues modulo $Q$ then, for all $a \in D$, there exists
  $r \in \calT$ such that $a \equiv r \mod Q$. This implies
  $f(a) = \prod_{r\in \calT}(a-r) \in Q$ for all $a \in D$ and hence
  $\fixdiv(f)\subseteq Q$.

  Now assume that $D$ is a Dedekind domain and
  $\calT = \biguplus_{i=1}^e\calT_i \uplus S$ such that
  \ref{item:fdv-1} and \ref{item:fdv-2} hold.  If
  $f_i = \prod_{r \in \calT_i}(x-r)$ for $1 \leq i \leq e$ and
  $g = \prod_{s \in \calT_0}(x-s)$, then $f = (\prod_{i=1}^ef_i)g$. Since
  $\calT_i$ is a complete system of residues modulo $Q$, it follows
  that $\val_Q(f_i(a)) \geq 1$ for all $a \in D$.  Therefore, for all
  $a \in D$,
  \begin{equation}\label{eq1:lemma:fix-div-val}
    \val_Q(f(a)) =\sum_{i=1}^e\val_Q(f_i(a)) + \val_Q(g(a)) \geq e
  \end{equation}

  For $1 \leq i \leq e$, let $a_i \in \calT_i$ with
  $a_i \equiv z \mod Q$. Since the elements $a_i$ are in the same
  residue class modulo $Q^2$, there exists $d\in D$ in the same
  residue class modulo $Q$ as $z$ and all the $a_i$, but in a different
  residue class modulo $Q^2$ from all the $a_i$.

  For such a $d$, then $\val_Q(f_i(d)) = 1$ for all $1\le i\le e$ and
  $\val_Q(g(d)) = 0$, since for all $s \in \calT_0$,
  $s \not \equiv z \equiv d \mod Q$. Therefore
  \begin{equation*}
  \val_Q(f(d)) =\sum_{i=1}^e\val_Q(f_i(d)) + \val_Q(g(d)) = e
  \end{equation*}
  which implies that $\val_Q(\fixdiv(f)) = e$.
\end{proof}

Next, we need to discuss how to replace split monic polynomials in $D[x]$ 
by monic polynomials in $D[x]$ which are irreducible in $K[x]$, without
changing the fixed divisors.

\begin{lemma}\label{lemma:replacements}
  Let $D$ be a Dedekind domain with infinitely many maximal ideals and
  $K$ its quotient field. Let $I \neq \emptyset$ be a finite set
  and $f_i\in D[x]$ be monic polynomials for $i\in I$.

  Then, there exist monic polynomials $F_i \in D[x]$ for $i\in I$,
  such that
\begin{enumerate}
\item $\deg(F_i) = \deg(f_i)$ for all $i\in I$,
\item the polynomials $F_i$ are irreducible in $K[x]$ and 
   pairwise non-associated in $K[x]$ and
\item for all subsets $J\subseteq I$ and all partitions
  $J = J_1 \uplus J_2$,
  \begin{equation*}
    \fixdiv\left(\prod_{j\in J_1} f_j\prod_{j\in J_2}F_j\right)
    = \fixdiv\left(\prod_{j\in J} f_j\right).
 \end{equation*}
 \label{repl-prop-3}
\end{enumerate}
\end{lemma}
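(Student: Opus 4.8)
The plan is to obtain each $F_i$ from $f_i$ by perturbing only its non-leading coefficients: I keep them congruent to those of $f_i$ modulo a high power of every ``small'' prime, so that no fixed divisor can change, while simultaneously forcing an Eisenstein condition at a fresh ``large'' prime, so that $F_i$ becomes irreducible in $K[x]$. We may assume each $\deg(f_i)\ge 1$, since otherwise $f_i=1$ and no irreducible monic replacement of the same degree exists.

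First I would isolate the finitely many primes that can occur. Put $N=\sum_{i\in I}\deg(f_i)$. By part~\ref{rem:intd-fixdiv-index} of Remark~\ref{rem:intd-fixdiv}, a monic polynomial $g$ with $\fixdiv(g)\subseteq P$ must satisfy $\norm{P}\le\deg(g)$, because its leading coefficient $1$ keeps it out of $P[x]$. Hence for every nonempty $J\subseteq I$ and every partition, the product $\prod_{j\in J_1}f_j\prod_{j\in J_2}F_j$ is monic of degree $\le N$, so its fixed divisor involves only primes of index $\le N$. By the Proposition there are only finitely many such primes; call this finite set $\mathcal{P}$. I then choose $k$ strictly larger than every $\val_P(\fixdiv(\prod_{j\in J}f_j))$ with $P\in\mathcal{P}$ and $J\subseteq I$, and set $M=\prod_{P\in\mathcal{P}}P^{k}$. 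The key reduction is that matching values modulo $P^k$ forces the fixed divisors to agree at $P$: if $F_j(a)\equiv f_j(a)\pmod{P^{k}}$ for all $a\in D$ and all $P\in\mathcal{P}$, then the two products agree modulo $P^k$ at every $a$; since the relevant minimal valuation $e_J=\val_P(\fixdiv(\prod_{j\in J}f_j))$ is $<k$, for each $a$ the two values share the same $P$-valuation whenever it is $<k$ and both lie in $P^{k}$ otherwise, so $\min_a\val_P$ coincides on both products. As both fixed divisors are supported on $\mathcal{P}$, agreement at every $P\in\mathcal{P}$ yields~\ref{repl-prop-3}.

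It therefore suffices to produce, for each $i$, a monic $F_i$ of degree $\deg(f_i)$ whose non-leading coefficients are congruent to those of $f_i$ modulo $M$ and which is irreducible and pairwise distinct from the others. For each $i$ I pick a maximal ideal $Q_i\notin\mathcal{P}$, with the $Q_i$ distinct; these exist because $D$ has infinitely many maximal ideals while $\mathcal{P}$ is finite. Writing $F_i=x^{d_i}+\sum_{j<d_i}b^{(i)}_jx^{j}$, I use the Chinese Remainder Theorem for the pairwise comaximal ideals $M$, $Q_i^{2}$ and the $Q_{i'}$ ($i'\ne i$) to choose each $b^{(i)}_j$ congruent to the corresponding coefficient of $f_i$ modulo $M$ and lying in $Q_i$, with $b^{(i)}_0\notin Q_i^{2}$ and $b^{(i)}_0\notin Q_{i'}$ for $i'\ne i$. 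The Eisenstein condition at $Q_i$ (all non-leading coefficients in $Q_i$, constant term outside $Q_i^{2}$) makes $F_i$ irreducible in $K[x]$; and since the constant term of $F_i$ lies in $Q_i$ but that of $F_{i'}$ does not, we get $F_i\ne F_{i'}$. As associated monic polynomials in $K[x]$ are equal, the $F_i$ are pairwise non-associated, giving properties (i) and (ii).

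The hard part is the tension in this last step: the perturbation must be invisible to every fixed divisor yet large enough to force both irreducibility and distinctness. This is exactly what is resolved by confining the value-preserving congruences to the finite set $\mathcal{P}$ and spending the remaining CRT freedom at fresh primes $Q_i$ of index $>N$ which, by the degree bound, can never enter any of the fixed divisors in question; matching coefficients modulo $M$ is then a convenient sufficient way to secure the value congruences needed in~\ref{repl-prop-3}.
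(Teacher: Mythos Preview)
Your proof is correct and follows essentially the same strategy as the paper: perturb the non-leading coefficients so they remain congruent modulo high powers of every small prime (hence all relevant fixed divisors are preserved) while enforcing an Eisenstein condition at a freshly chosen prime outside $\mathcal{P}$ to obtain irreducibility in $K[x]$. The only cosmetic differences are that the paper uses a single auxiliary prime $Q$ for all $F_i$ (achieving pairwise distinctness via the CRT freedom in the constant terms rather than via distinct $Q_i$) and bounds the needed exponents using only $\fixdiv\bigl(\prod_{i\in I} f_i\bigr)$, which already dominates every subproduct fixed divisor.
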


\begin{proof}
  Let $P_1, \ldots, P_n$ be all maximal ideals $P$  of $D$ with
   $\norm{P}\le\deg\left(\prod_{i\in I} f_i\right)$.
  Suppose the prime factorization of the fixed divisor of the product
  of the $f_i$ is
  \begin{equation*}
    \fixdiv\left(\prod_{i\in I}f_i\right) =\prod_{j=1}^n P_j^{e_j}.
  \end{equation*}

  Let $Q \in \maxspec{D}\setminus \{P_1, \ldots, P_n\}$.  Using the
  Chinese Remainder Theorem, we add elements to the coefficients of
  the $f_i$ such that the resulting polynomials can be seen to be
  irreducible according to Eisenstein's irreducibility criterion with
  respect to $Q$, while retaining all relevant properties with respect
  to sufficiently high powers of the $P_i$.

  Let $f_{ik}$ denote the coefficient of $x^k$ in $f_i$.  For $i\in I$
  and $0\le k < \deg(f_i)$, let $g_{ik}\in D$ such that
  \begin{enumerate}
  \item $g_{ik} \in \prod_{j=1}^nP_j^{e_j+1}$ for all
    $0\le k < \deg(f_i)$.
  \item $g_{ik} \equiv -f_{ik} \mod Q$ for all
    $0\le k < \deg(f_i)$ and 
  \item
   $g_{i0} \not\equiv -f_{i0} \mod Q^2$.
  \end{enumerate}
 
  Since the $g_{ik}$ satisfying the above conditions are only
  determined modulo $Q^2\prod_{i=1}^n P_i^{e_i+1}$, there are infinitely
  many choices for each $g_{ik}$. We use this flexibility to 
  implement that $g_{i0}+f_{i0}\neq g_{j0}+f_{j0}$ for $i\neq j$. 
  Then, for $i\in I$, we set
  \[F_i = f_i +\sum_{k=0}^{\deg(f_i)-1}g_{ik}x^k.\]
  As the resulting $F_i$ are monic and distinct, they are pairwise 
  non-associated in $K[x]$.
%

  According to Eisenstein's irreducibility criterion, the polynomials
  $F_i$ are irreducible in $D[x]$ for $i\in I$,
  cf.~\cite[§29,~Lemma~1]{Matsumura:1989:comm-alg}. Since the $F_i$ are
  monic and $D$ is integrally closed, it follows that the $F_i$ are
  irreducible in $K[x]$ for all $i\in I$, cf.~\cite[Ch.~5,~§1.3,
  Prop.~11]{Bourbaki:1989:comm-alg}.

  By construction,
  \[F_i \equiv f_i \mod \left(\prod_{j=1}^nP_j^{e_j+1}\right)D[x]\]
  for all $i\in I$. 
  Now, if $g(x)$ is the product of any selection of the polynomials 
  $f_i$, and $G(x)$ the modified product in which some of the $f_i$ 
  have been replaced by $F_i$, then $g(x)$ is congruent to $G(x)$ 
  modulo $\left(\prod_{j=1}^nP_j^{e_j+1}\right)D[x]$.

  Hence, for all $a\in D$, $g(a)\equiv G(a)$ modulo 
  $\left(\prod_{j=1}^nP_j^{e_j+1}\right)$ and, therefore,
  \[\min_{a\in D}\val_P(G(a)) = \min_{a\in D}\val_P(g(a))\]
  for all $P$ that
  could conceivably divide the fixed divisor of $G(x)$ or $g(x)$
   by Remark~\ref{rem:intd-fixdiv}.\ref{rem:intd-fixdiv-index}.
  This implies the last assertion of the Lemma, to the effect that 
  substituting $F_i$ for some or all of the $f_i$ does not change 
  the fixed divisor of a product.
\end{proof}

Finally, the last two lemmas enable us to understand all essentially
different factorizations of a certain type of polynomials in $\Int(D)$.

\begin{lemma}\label{lemma:pre-factorizations}
  Let $D$ be a Dedekind domain with quotient field $K$ and
  $f\in \Int(D)$ of the following form:
  \[f = \frac{\prod_{i\in I}f_i}{c}\quad{\text with}\quad 
  \fixdiv\left(\prod_{i \in I}f_i\right) = cD,\] 
  where $c$ is a non-unit of $D$ and for each $i\in I$, $f_i\in D[x]$ 
  is irreducible in $K[x]$.

  Let $\calP \subseteq \maxspec{D}$ be the finite set of prime ideal
  divisors of $cD$. 
  If $f=g_1 \cdots g_m$ is a factorization of $f$ into (not necessarily
  irreducible) non-units in $\Int(D)$ then each $g_j$ is of the form
  \begin{equation*}
    g_j= a_j\prod_{i \in I_j}f_i,
  \end{equation*}
  where $\emptyset\neq I_j\subseteq I$ and $a_j\in K$, such that
  $I_1 \uplus \ldots \uplus I_m = I$, $a_1 \cdots a_m = c^{-1}$ and
\begin{enumerate}
\item
$\val_P(a_j) \le 0$
for all $P\in \maxspec{D}$ and all $1\le j\le m$; and
\item
$\val_P(a_j) = 0$
for all $P\in \maxspec{D}\setminus\calP$ and all $1\le j\le m$.
\end{enumerate}

\end{lemma}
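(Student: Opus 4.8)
The plan is to pass to the polynomial ring $K[x]$, which is a unique factorization domain, extract the combinatorial skeleton of the factorization there, and then transfer the arithmetic constraints back to $\Int(D)$ through the $P$-adic valuations. First I would note that, since $f = c^{-1}\prod_{i\in I}f_i$ and each $f_i$ is irreducible in $K[x]$, the equation $f = g_1\cdots g_m$ is a factorization of $f$ in $K[x]$ into $m$ factors, each $g_j$ possibly a unit of $K[x]$, that is, a nonzero constant. Uniqueness of factorization in $K[x]$ then lets me distribute the irreducible factors among the $g_j$: there is a partition $I = I_1\uplus\cdots\uplus I_m$ and constants $a_j\in K^\times$ with $g_j = a_j\prod_{i\in I_j}f_i$. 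Comparing the two expressions for $f$ yields $a_1\cdots a_m = c^{-1}$. Any ambiguity arising from $f_i$ that are associated in $K[x]$ is harmless, since the associating constants can be absorbed into the $a_j$.

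Next I would rephrase membership in $\Int(D)$ valuation-theoretically. For each $P\in\maxspec{D}$ and each $a\in D$ we have $\val_P(g_j(a)) = \val_P(a_j) + \val_P\big(\prod_{i\in I_j}f_i(a)\big)$, so $g_j\in\Int(D)$ is equivalent to $\val_P(a_j)\ge -\val_P\big(\fixdiv(\prod_{i\in I_j}f_i)\big)$ for every $P$. I would record two auxiliary facts: each $\prod_{i\in I_j}f_i$ lies in $D[x]$, so its fixed divisor is an integral ideal and $\val_P(\fixdiv(\prod_{i\in I_j}f_i))\ge 0$; and, applying Remark~\ref{rem:intd-fixdiv}.\ref{rem:fixdiv-prod} to the product over the partition, $\sum_{j}\val_P(\fixdiv(\prod_{i\in I_j}f_i)) \le \val_P(\fixdiv(\prod_{i\in I}f_i)) = \val_P(c)$.

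The crux is a squeeze argument. Fix $P$ and abbreviate $v_P = \val_P(c)$ and $d_j = \val_P(\fixdiv(\prod_{i\in I_j}f_i))$. From $a_1\cdots a_m = c^{-1}$ I get $\sum_j\val_P(a_j) = -v_P$, and summing the membership inequalities gives $-v_P = \sum_j\val_P(a_j)\ge -\sum_j d_j\ge -v_P$, where the final step is the superadditivity of the fixed divisor. Hence every inequality is forced to be an equality; in particular $\val_P(a_j) = -d_j$ for all $j$. Since $d_j\ge 0$, this gives assertion~(i). For $P\notin\calP$ one has $v_P = 0$, which forces each $d_j = 0$ and hence $\val_P(a_j) = 0$, giving~(ii). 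Finally, if some $I_j$ were empty then $d_j = \val_P(\fixdiv(1)) = 0$ for every $P$, so $a_j$ would be a unit of $D$ and $g_j$ a unit of $\Int(D)$, contradicting that $g_j$ is a non-unit; therefore each $I_j\ne\emptyset$.

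The step I would be most careful about is the direction of the containment in Remark~\ref{rem:intd-fixdiv}.\ref{rem:fixdiv-prod}, which reads $\fixdiv(\prod_{i\in I}f_i)\subseteq\prod_j\fixdiv(\prod_{i\in I_j}f_i)$ and translates into $v_P\ge\sum_j d_j$, precisely the inequality that makes the squeeze pinch from above; reversing it would collapse the argument, so I would verify it explicitly against the convention that ``to contain is to divide''. Everything else is routine valuation bookkeeping once the $K[x]$-decomposition of the first paragraph is in hand.
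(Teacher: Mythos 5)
Your proposal is correct and follows essentially the same route as the paper's proof: the decomposition $g_j = a_j\prod_{i\in I_j}f_i$ obtained from unique factorization in $K[x]$, the identity $\sum_{j}\val_P(a_j) = -\val_P(c)$, and the superadditivity of fixed divisors (Remark~\ref{rem:intd-fixdiv}.\ref{rem:fixdiv-prod}) are exactly the ingredients the paper uses. The only differences are organizational: the paper argues by contradiction (if $\val_P(a_t)>0$ for some $t$, then $\prod_{j\neq t}g_j\notin\Int(D)$) and excludes constant factors at the outset via Remark~\ref{rem:intd-fixdiv}.\ref{rem:constant-factor}, whereas your squeeze yields the exact equalities $\val_P(a_j) = -\val_P\bigl(\fixdiv\bigl(\prod_{i\in I_j}f_i\bigr)\bigr)$ and deduces $I_j\neq\emptyset$ only at the end.
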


\begin{proof}
  Let $f=g_1 \cdots g_m$ be a factorization of $f$ into (not necessarily
  irreducible) non-units in $\Int(D)$.  Since $\fixdiv(f)=1$, no $g_i$
  is a constant, by Remark \ref{rem:intd-fixdiv}.\ref{rem:constant-factor}.
  Each factor $g_j$ is, therefore, of the form
  \begin{equation}\label{eq:gen-fac}
    g_j= a_j\prod_{i \in I_j}f_i
  \end{equation}
  where $I_j$ is a non-empty subset of $I$ and $a_j\in K$, such that
  $I_1 \uplus \ldots \uplus I_m = I$ and $a_1 \cdots a_m = c^{-1}$. 
  Note that for all $P\in \maxspec{D}$
  \begin{equation}\label{eq:val-sum}
    \sum_{j=1}^m\val_P(a_j) =  -\val_P(c).
  \end{equation}
  Suppose  $\val_P(a_t) > 0$ for some maximal ideal $P$ and some
  $1\le t\le m$. Then
  $\sum_{j\neq t}\val_P(a_j) < - \val_P(c)$.

  Remark~\ref{rem:intd-fixdiv}.\ref{rem:fixdiv-prod} and the fact that 
  $\val_P\left(\fixdiv\left(\prod_{i\in I}f_i\right)\right) = \val_P(c)$
  imply
  $\val_P\left(\fixdiv\left(\prod_{j\neq t}\prod_{i\in I_j}f_i\right)\right)
  \le \val_P(c)$. 
  But now
  \begin{equation*}
    \val_P\left(\fixdiv\left(\prod_{j\neq t}g_j\right)\right) =
    \val_P\left(\fixdiv\left(\prod_{j\neq t} \prod_{i\in I_j}f_i\right)\right)
    + \sum_{j\neq t}\val_P(a_j) <0,
  \end{equation*}
  which means that
  \begin{equation*}
    \prod_{j\neq t} g_j   \notin \Int(D),
  \end{equation*}
  a contradiction.  We have established that  $\val_P(a_j) \le 0$ for
  all $P\in \maxspec{D}$ and all $1\le j\le m$. 
  Now Equation~\eqref{eq:val-sum}
  and the fact that $\val_P(c)=0$ for all $P\notin\calP$ imply 
  $\val_P(a_j) = 0$ for all $P\notin\calP$ and all $1\le j\le m$.
\end{proof}

\begin{definition}\label{def:indispensable}
  Let $D$ be a Dedekind domain, $f_i \in D[x]$ with $i\in I$ for a
  finite set $I\neq \emptyset$ and $\calP \subseteq \maxspec{D}$ be
  the finite set of prime ideal divisors of
  $\fixdiv\left(\prod_{i\in I}f_i\right)$.  If
  $P\in \calP$, we say $f_{k}$ is \emph{indispensable for $P$} (among
  the polynomials $f_i$ with $i\in I$) if for all $J\subseteq I$
  \begin{equation*}
  \val_P\left(\fixdiv\left(\prod_{i\in J}f_i\right)\right) > 0
 \Longrightarrow k\in J.
\end{equation*}
\end{definition}

\begin{remark}\label{remark:indispensable}
  Note that (with the notation of Definition~\ref{def:indispensable})
  $f_k$ is indispensable for $P\in \calP$ (among the polynomials $f_i$
  with $i\in I$) if and only if there exists an element $z\in D$ such
  that $\val_P(f_k(z)) > 0$ and $\val_P(f_i(z)) = 0$ for all
  $i\neq k$.
\end{remark}

\begin{remark}\label{remark:indispensable-replacements}
  For a finite set $I\neq \emptyset$, let $f_i$ and $F_i\in D[x]$ with $i\in I$ such
  that for all $J\subseteq I$ and all partitions $J = J_1 \uplus J_2$
  \begin{equation*}
    \fixdiv \left( \prod_{j\in J_1}f_i \prod_{j\in J_2} F_j\right) =    \fixdiv \left( \prod_{j\in J}f_i \right).
  \end{equation*}
  It follows that
  $\fixdiv\left(\prod_{i\in I}f_i\right) = \fixdiv\left(\prod_{i\in
      I}F_i\right)$ which implies that the fixed divisor of
  $\prod_{i\in I}f_i$ and $\prod_{i\in I}F_i$ have the same set $\calP$
  of prime ideal divisors. Moreover, for all $P\in \calP$ and all
  $J\subseteq I$, it follows that
  $\val_P\left(\fixdiv\left(\prod_{i\in J}F_i\right)\right) =
  \val_P\left(\fixdiv\left(\prod_{i\in J}f_i\right)\right)$. Hence,
  for $P\in \calP$, $f_k$ is indispensable for $P$ (among the
  polynomials $f_i$ with $i\in I$) if and only if $F_k$ is
  indispensable for $P$ (among the polynomials $F_i$ with $i\in I$).
  Note that this applies in particular in the setting of
  Lemma~\ref{lemma:replacements}.
\end{remark}

\begin{lemma}\label{lemma:factorizations}
  Let $D$ be a Dedekind domain with quotient field $K$ and
  $f\in \Int(D)$ of the following form:
  \[f = \frac{\prod_{i\in I}f_i}{c}\quad{\text with}\quad 
  \fixdiv\left(\prod_{i \in I}f_i\right) = cD,\] 
  where $c$ is a non-unit of $D$ and for each $i\in I$, $f_i\in D[x]$ 
  is irreducible in $K[x]$.
  Let $\calP \subseteq \maxspec{D}$ be the finite set of prime ideal
  divisors of $cD$. 

Suppose, for each $P\in \calP$, $\Lambda_P$ is a subset of $I$ such 
that $f_i$ is indispensable for $P$ for each $i\in\Lambda_P$. 
Let $\Lambda = \bigcup_{P\in \calP} \Lambda_P$.  

If $\bigcap_{P \in \calP} \Lambda_P \neq \emptyset$
then all essentially different factorizations of $f$ into irreducibles 
in $\Int(D)$ are given by:
\begin{equation*}
\frac{\left(\prod_{i \in \Lambda\cup J_1}f_i\right)}{c} 
\cdot \prod_{j\in J_2}f_j
\end{equation*}
(each $f_j$ with $j\in J_2$ counted as an individual factor),
where $I = \Lambda \uplus J_1 \uplus J_2$ such that $J_1$ is minimal
with
$\fixdiv\left(\prod_{i \in \Lambda\cup J_1}f_i\right) = cD$.
\end{lemma}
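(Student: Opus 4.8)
The plan is to start from an arbitrary factorization $f = g_1 \cdots g_m$ of $f$ into irreducibles and run it through Lemma~\ref{lemma:pre-factorizations}, which already supplies the coarse shape: $g_j = a_j\prod_{i\in I_j}f_i$ with $I = I_1\uplus\cdots\uplus I_m$, $a_1\cdots a_m = c^{-1}$, $\val_P(a_j)\le 0$ for all $P$, and $\val_P(a_j)=0$ for $P\notin\calP$. The crux of the whole argument — and the step I expect to be the main obstacle — is to upgrade this to the statement that a \emph{single} factor absorbs the entire denominator. Fix $P\in\calP$. Since $\sum_j\val_P(a_j) = -\val_P(c) < 0$ with each summand $\le 0$, some $g_t$ has $\val_P(a_t)<0$; as $g_t\in\Int(D)$, this forces $\val_P(\fixdiv(\prod_{i\in I_t}f_i))\ge -\val_P(a_t)>0$, so by Remark~\ref{remark:indispensable} every index of $\Lambda_P$ lies in $I_t$, i.e.\ $\Lambda_P\subseteq I_t$. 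Because the $I_j$ are disjoint and $\Lambda_P\neq\emptyset$, such $t$ is unique; hence $\val_P(a_t)=-\val_P(c)$ and $\val_P(a_j)=0$ for $j\ne t$. Here is where I use the hypothesis $\bigcap_{P\in\calP}\Lambda_P\neq\emptyset$: picking $i_0$ in this intersection, $i_0\in\Lambda_P\subseteq I_{t(P)}$ for every $P$, yet $i_0$ lies in exactly one block $I_{j_0}$, so $t(P)=j_0$ for all $P$. Thus $g_{j_0}$ alone carries the denominator, $a_{j_0}D = c^{-1}D$, every other $a_j$ is a unit, and $\Lambda=\bigcup_P\Lambda_P\subseteq I_{j_0}$.

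Next I read off the shape. For $j\neq j_0$ the factor $g_j$ is a unit times $\prod_{i\in I_j}f_i$; if $|I_j|\ge 2$ it would split into two non-constant (hence non-unit) polynomial factors, so irreducibility forces $|I_j|=1$. Therefore the factors other than $g_{j_0}$ are, up to associates, the individual $f_i$ with $i$ ranging over $J_2 := I\setminus I_{j_0}$, and putting $J_1 := I_{j_0}\setminus\Lambda$ gives $I = \Lambda\uplus J_1\uplus J_2$ with $g_{j_0}\sim\frac{\prod_{i\in\Lambda\cup J_1}f_i}{c}$. Since $\Lambda\cup J_1\subseteq I$, sub-multiplicativity of the fixed divisor (Remark~\ref{rem:intd-fixdiv}.\ref{rem:fixdiv-prod}) yields $cD=\fixdiv(\prod_{i\in I}f_i)\subseteq\fixdiv(\prod_{i\in\Lambda\cup J_1}f_i)$, while $g_{j_0}\in\Int(D)$ gives the reverse inclusion, so $\fixdiv(\prod_{i\in\Lambda\cup J_1}f_i)=cD$ automatically. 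Minimality of $J_1$ is then forced: if some $J_1'\subsetneq J_1$ still satisfied $\fixdiv(\prod_{i\in\Lambda\cup J_1'}f_i)=cD$, then $\frac{\prod_{i\in\Lambda\cup J_1'}f_i}{c}\cdot\prod_{i\in J_1\setminus J_1'}f_i$ would be a proper factorization of $g_{j_0}$ into non-units, contradicting its irreducibility.

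Finally I verify the converse, that every displayed expression genuinely is a factorization into irreducibles. For $j\in J_2$ the index $j$ is disjoint from every $\Lambda_P$, so the contrapositive of Remark~\ref{remark:indispensable} gives $\val_P(\fixdiv(\prod_{i\in J_2}f_i))=0$ for $P\in\calP$, and monotonicity of the fixed divisor under sub-products forces $\val_P=0$ for $P\notin\calP$ as well; hence $\fixdiv(\prod_{i\in J_2}f_i)=D$ and a fortiori $\fixdiv(f_j)=D$ for each $j\in J_2$, so each such $f_j$ is irreducible in $\Int(D)$ by Remark~\ref{rem:intd-fixdiv}.\ref{rem:irred-in-IntD}. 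Irreducibility of the big factor $\frac{\prod_{i\in\Lambda\cup J_1}f_i}{c}$ under minimality of $J_1$ follows by applying the single-distinguished-factor analysis of the first paragraph to this factor itself: it is again of the required form with the same $c$, the same $\calP$, the same sets $\Lambda_P$ (indispensability is preserved when the index set shrinks) and the same intersection point $i_0$, so any proper splitting would, via the fixed-divisor monotonicity used above, produce a strictly smaller $J_1'$ with $\fixdiv(\prod_{i\in\Lambda\cup J_1'}f_i)=cD$, contradicting minimality. I expect the only delicate points to be the uniqueness-and-gluing step of the first paragraph and the bookkeeping that the restricted data $(\calP,\Lambda_P,i_0)$ still satisfy all hypotheses when the argument is reapplied to the big factor.
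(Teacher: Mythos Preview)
Your proof is correct and follows essentially the same route as the paper's. The only cosmetic difference is the order in which the intersection hypothesis is invoked: the paper picks $i_0\in\bigcap_{P\in\calP}\Lambda_P$ at the outset, declares $i_0\in I_1$, and uses indispensability directly to force $\val_P(a_j)=0$ for $j\ge 2$ and all $P\in\calP$; you instead first isolate, for each $P$ separately, the unique block $I_{t(P)}$ carrying the $P$-denominator and then use $i_0$ to glue $t(P)\equiv j_0$. Both arguments are the same computation read in a slightly different order, and your converse verification (that the displayed expressions are indeed irreducible factorizations) is a bit more explicitly spelled out than in the paper.
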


\begin{proof}
  Let $f=g_1 \cdots g_m$ be a factorization of $f$ into (not necessarily
  irreducible) non-units in $\Int(D)$. As in
  Lemma~\ref{lemma:pre-factorizations},
  \begin{equation}
    g_j= a_j\prod_{i \in I_j}f_i
  \end{equation}
  where $I_j$ is a non-empty subset of $I$ and $a_j\in K$, such that
  $I_1 \uplus \ldots \uplus I_m = I$ and $a_1 \cdots a_m = c^{-1}$.
  Furthermore, 
  $\val_P(a_j) \le 0$ for all $P\in \maxspec{D}$ and all $1\le j\le m$
  and $\val_P(a_j) = 0$ for all $P\notin \calP$ and all $1\le j\le m$.

  We know there exists a polynomial $f_{i_0}$ that is indispensable
  for all $P\in \calP$.  We may assume that $i_0\in I_1$. 
  By the definition of indispensable polynomial, 
  $\val_P\left(\fixdiv\left(\prod_{i\in I_j} f_i \right)\right)= 0$,
  for $2\le j \le m$ and all $P\in \calP$.
  From this and the fact that $g_j = a_j\prod_{i\in I_j}f_i$ 
  is in $\Int(D)$, we infer that  $\val_P(a_j) = 0$ for
  all $2\le j \le m$ and all $P\in \calP$. We have shown that
  $a_2$, \ldots, $a_m$ are units of $D$.
  
  Now $u=a_2\cdots a_m$ is a unit of $D$ such that $a_1 u=c^{-1}$. 
  Since $g_1\in\Int(D)$, we must have
  \begin{equation*}
   \val_P\left( \fixdiv\left( \prod_{i\in I_1}f_i \right) \right) = \val_P(c) >0
  \end{equation*}
  for all $P\in\calP$ and, by Definition~\ref{def:indispensable}, 
  $\Lambda \subseteq I_1$.
  
  So far we have shown that every factorization $f=g_1\cdots g_m$ of
  $f$ into (not necessarily irreducible) non-units of $\Int(D)$ is --
  up to reordering of factors and multiplication of factors by units
  in $D$ -- the same as one of the following:
  \begin{equation}\label{eq:fact-into-non-units}
  \frac{\left(\prod_{i\in \Lambda\cup J_1}f_i\right)}{c}
  \cdot \left(\prod_{j\in I_2}f_j\right)\cdots 
  \left(\prod_{j\in I_m}f_j\right),
  \end{equation}
  where $I = I_1\uplus \cdots \uplus I_m$ and $I_1 = \Lambda \uplus J_1 $.

  It remains to characterize, among the factorizations of the above
  form, those in which all factors are irreducible in $\Int(D)$.

  Since $\fixdiv(f) = D$, it is clear that $\fixdiv( g_j) = D$ for all 
  $1\leq j \leq m$, 
  by Remark~\ref{rem:intd-fixdiv}.\ref{rem:fixdiv-prod}. By the
  same token, $\fixdiv(f_i) =D$ for all $i\in I_j$ with $j\ge 2$.
  Since the $f_i$ are irreducible in $K[x]$, those of them with
  fixed divisor $D$ are irreducible in $\Int(D)$, by
  Remark~\ref{rem:intd-fixdiv}.\ref{rem:irred-in-IntD}.
  The criterion for each factor $g_j=\prod_{i\in I_j}f_i$ with $j\ge 2$ 
  to be irreducible is, therefore, $|I_j|=1$ for all $j\ge 2$. 
  
  Now, concerning the irreducibility of $g_1$, the same arguments 
  that lead to Equation~\eqref{eq:fact-into-non-units}, applied to 
  $g_1=c^{-1}\left(\prod_{i\in \Lambda\cup J_1}f_i\right)$
  instead of $f$, show that $g_1$ is irreducible in $\Int(D)$ if 
  and only if we cannot split off any factors $f_i$ with $i\in J_1$. 
  This is equivalent to $\fixdiv\left(\prod_{i\in \Lambda\cup J}f_i\right) \neq cD$ 
  for every proper subset $J\subsetneq J_1$, in other words, to
  $J_1 $ being minimal such that
  $\fixdiv\left(\prod_{i\in \Lambda\cup J_1}f_i\right) =cD$.
  In this case we set $J_2 = \bigcup_{j=2}^m I_j$ and the assertion follows.
\end{proof}

\begin{remark}
  When $\left|\calP\right|>1$, the hypothesis
  $\bigcap_{P\in \calP} \Lambda_P \neq \emptyset$ in
  Lemma~\ref{lemma:factorizations} can be replaced by a weaker
  condition:

  Consider the prime ideals $P\in \calP$ as vertices of an undirected
  graph $G$ and let $(P, Q)$ be an edge of $G$ if and only if there 
  exists a polynomial $f_t$ which is indispensable for both $P$ and $Q$. 
  If $G$ is a connected graph, then the conclusion of
  Lemma~\ref{lemma:factorizations} holds. The proof of 
  Lemma~\ref{lemma:factorizations} generalizes readily.
\end{remark}

\section{Construction of polynomials with prescribed sets of lengths}
\label{sec:proof}

We are now ready to prove the main result of this paper.

\begin{theorem}\label{thm:prescribed-lengths}
  Let $D$ be a Dedekind domain with infinitely many maximal ideals,
  all of them of finite index.

  Let $1\le m_1 \le m_2 \le \cdots \le m_n$ be natural numbers.

  Then there exists a polynomial $H\in \Int(D)$ with 
  exactly $n$ essentially different factorizations 
  into irreducible polynomials in $\Int(D)$,
  the length of these factorizations being $m_1 + 1$, \ldots, $m_n + 1$.
\end{theorem}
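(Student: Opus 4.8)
The plan is to push everything through Lemma~\ref{lemma:factorizations}, so that the combinatorial shape of all factorizations is pinned down in advance, and then to engineer the underlying data so that exactly the prescribed lengths occur. Concretely, I would build $H$ of the form treated there, namely $H=\frac{\prod_{i\in I}f_i}{c}$ with $\fixdiv(\prod_{i\in I}f_i)=cD$, the $f_i$ irreducible in $K[x]$ and pairwise non-associated, together with a forced part $\Lambda=\bigcup_{P\in\calP}\Lambda_P$ satisfying $\bigcap_{P\in\calP}\Lambda_P\neq\emptyset$. That lemma then identifies the essentially different factorizations of $H$ with the minimal subsets $J_1\subseteq I\setminus\Lambda$ for which $\fixdiv(\prod_{i\in\Lambda\cup J_1}f_i)=cD$, the attached factorization having length
\[
1+\bigl|I\setminus(\Lambda\cup J_1)\bigr| = 1+(A-|J_1|),\qquad A:=|I\setminus\Lambda|.
\]
Since distinct minimal $J_1$ give distinct complements $J_2=I\setminus(\Lambda\cup J_1)$, and the $f_i$ are pairwise non-associated, distinct minimal $J_1$ yield essentially different factorizations. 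Thus the theorem reduces to a purely combinatorial task: produce data whose minimal ``covers'' $J_1$ have sizes, with multiplicity, exactly $A-m_1,\dots,A-m_n$ for some $A\ge m_n+1$; then the factorization lengths are precisely $m_1+1,\dots,m_n+1$ and there are exactly $n$ of them.

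Next I would translate the condition $\fixdiv(\prod_{i\in\Lambda\cup J_1}f_i)=cD$ into a statement about covering residue classes, via Lemma~\ref{lemma:fix-div-val}. Crucially, I would fix a non-unit $c\in D$ \emph{first}, so that $cD$ is genuinely principal (as Lemma~\ref{lemma:factorizations} requires), and let $\calP$ be the set of prime divisors of $cD$. Taking each $f_i$ to be split, $f_i=\prod_{r\in\calT_i}(x-r)$, and imposing suitable incongruences modulo $P^2$, Lemma~\ref{lemma:fix-div-val} makes $\val_P(\fixdiv(\prod_{i\in S}f_i))$ equal to the minimum, over residue classes modulo $P$, of the number of roots of $\prod_{i\in S}f_i$ in that class. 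I would single out one prime $P_0\in\calP$ of large norm to host the design, letting a prescribed list of residue classes modulo $P_0$ serve as ``tasks'' that $J_1$ must cover. A single high-degree polynomial $f_0$, placed into every $\Lambda_P$ (so that $\bigcap_{P}\Lambda_P\neq\emptyset$, and indispensable for each $P\in\calP$ because it owns a private residue class there), covers all non-task classes to the depth forced by $c$; by Remark~\ref{remark:indispensable} this makes $\Lambda$ the genuinely forced part. The remaining, inconvenient prime factors of $c$ are neutralised by having $\Lambda$ already cover all their residue classes to the required depth, so they impose no constraint on $J_1$, while the action polynomials are kept off enough classes at every prime to remain image-primitive (hence, being $K$-irreducible, irreducible in $\Int(D)$ by Remark~\ref{rem:irred-in-IntD}).

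The heart of the argument is then the design of the task/polynomial incidence so that the minimal covers realise exactly the size-multiset $\{A-m_j\}$. The basic gadget is that a universe of tasks may be covered either by one ``efficient'' polynomial meeting many tasks or by several ``inefficient'' ones splitting them, producing minimal covers of different sizes: for instance, over tasks $\{X,Y,Z\}$ the polynomials with task-sets $XYZ,\ XY,\ Z,\ X,\ Y$ give exactly the minimal covers $\{XYZ\}$, $\{XY,Z\}$, $\{X,Y,Z\}$, of sizes $1,2,3$. Iterating and cross-cutting such gadgets, while checking that no mixed sub-collection covers the universe except the intended ones, lets me prescribe the multiset of minimal-cover sizes; adjoining further action polynomials that are redundant for covering increases $A$ and shifts all sizes uniformly, so that $s_j=A-m_j$ can be arranged. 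Finally I would invoke Lemma~\ref{lemma:replacements} to replace the split $f_i$ by monic polynomials irreducible in $K[x]$ and pairwise non-associated, without disturbing any relevant fixed divisor, thereby legitimising the application of Lemma~\ref{lemma:factorizations}, whose conclusion reads off exactly $n$ factorizations of lengths $m_1+1,\dots,m_n+1$.

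The step I expect to be the main obstacle is the combinatorial design in the third paragraph: ensuring that the minimal covers are \emph{exactly} the intended ones, with no spurious covers arising from mixing efficient and inefficient polynomials, is delicate, since the fixed-divisor conditions are conjunctive (all residue classes must be covered) whereas the desired $n$ factorizations are a disjunctive family of alternatives. Getting genuine alternatives of prescribed, possibly repeated, sizes out of conjunctive covering constraints is where the real work lies. The surrounding bookkeeping—maintaining principality of $cD$, the indispensability of $\Lambda$ and the intersection condition $\bigcap_P\Lambda_P\neq\emptyset$, the neutralisation of unwanted prime factors of $c$, and the image-primitivity of the individual action polynomials—is routine but must all be carried simultaneously, and is exactly what Lemmas~\ref{lemma:fix-div-val} and~\ref{lemma:replacements} are tailored to handle.
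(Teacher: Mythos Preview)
Your reduction via Lemma~\ref{lemma:factorizations} to a minimal-cover problem over residue classes modulo one distinguished prime, with a single indispensable polynomial in $\Lambda$ neutralising all other prime factors of $c$, is exactly the paper's framework: there $\Lambda$ consists of the single polynomial $S$, the distinguished prime is $P$, and the action polynomials are the $F_i^{(k)}$ indexed by $I=\{(k,i):1\le k\le n,\ 1\le i\le m_k\}$. So your scaffolding is right, and you have correctly located where the content lies.

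The genuine gap is precisely where you flag it. You do not actually produce a covering hypergraph whose minimal covers realise the prescribed size-multiset, and ``iterate and cross-cut gadgets'' is not a construction. Composing disjoint gadgets makes the minimal covers of the union the Cartesian products of the component minimal covers, so their sizes form a sumset rather than an arbitrary multiset; to get anything else you need overlapping gadgets, and then ruling out spurious mixed covers is exactly the hard part you have not addressed. The paper solves this with one global design rather than composition: the tasks are the off-diagonal positions $r_{(k,i,h,j)}$ (with $k\ne h$) of an $m\times m$ matrix whose rows and columns are partitioned into blocks of sizes $m_1,\dots,m_n$, and the action polynomial indexed by $(k,i)$ covers the set $B[k,i]$ consisting of row $(k,i)$ together with column $(k,i)$. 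A subset $J\subseteq I$ then covers all tasks if and only if $I\setminus J$ lies inside a single block $I_h$, since two omitted indices $(k,i)$ and $(h,j)$ from distinct blocks leave $r_{(k,i,h,j)}$ uncovered. Hence the minimal covers are exactly $J=I\setminus I_h$ for $h=1,\dots,n$, of sizes $m-m_h$, giving lengths $1+m_h$ with the right multiplicities and with no spurious covers by construction. This row-and-column trick is the missing idea in your sketch; once you have it, the rest of your outline (choice of $c$ first, CRT bookkeeping, Lemma~\ref{lemma:replacements} at the end) matches the paper line for line.
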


\begin{proof}
  If $n=1$, then $H(x) = x^{m_1+1}\in \Int(D)$ is a polynomial which
  has exactly one factorization, and this factorization has length
  $m_1+1$.  From now on, assume $n\ge 2$.

  First, we construct $H(x)$.  Let
  $N = \left(\sum_{i=1}^n m_i\right)^2 - \sum_{i=1}^n m_i^2$ and $P$ a
  prime ideal of $D$ with $\norm{P}>N+1$. Let $c\in D$ such that
  $\val_P(c)=1$ and $c$ is not contained in any maximal ideal of index $2$.

  Say the prime factorization of $cD$ is $cD=P Q_1^{e_1}\cdots Q_t^{e_t}$.
Let $\tau=(\norm{P}-N)$ and $\sigma$ the maximum of the following numbers:
$\tau$, and $e_i\norm{Q_i}$ for $1\le i\le t$.

  We now choose two subsets of $D$: a set $\calR$ of order $N$,
  and $\calS=\{s_0,\ldots, s_{\sigma-1}\}$.
  Using the Chinese Remainder Theorem, we arrange that $\calR$ and 
  $\calS$ have the following properties:
\begin{enumerate}
\item $s_0 \equiv 0 \mod P$, and
  $\{s_0,\ldots, s_{\tau-1}\}\cup \calR$ is a complete system of
  residues modulo $P$.
\item $s_i\equiv 0\mod P$ for all $i\ge \tau$.
\item For each $Q_i$, $\calS$ contains $e_i$ disjoint complete systems
  of residues, in which the respective representatives of the same
  residue class in different systems are congruent modulo $Q_i^{2}$.
\item For each $Q_i$, no more than $e_i$ elements of $\calS$ are
  congruent to $1$ modulo $Q_i$.
\item For all $r\in\calR$, $r\equiv 0 \mod \bigcap_{i=1}^tQ_i$.
\item $\calR \cup \calS$ does not contain a complete system of
  residues for any prime ideal $Q$ of $D$
   other than $P$ and $Q_1,\ldots, Q_t$.
\end{enumerate}

We now assign indices to the elements of $\calR$ as follows
\begin{equation*}
  \calR = \{r_{(k,i,h,j)} \mid 
  1\le k, h \le n, k\ne h, 1\le i\le m_k, 1\le j\le m_h\}.
\end{equation*}
This allows us to visualize the elements of $\calR$ as entries in a
square matrix $B$ with $m =\sum_{i=1}^n m_i$ rows and columns, in
which the positions in the blocks of a block-diagonal matrix with
block sizes $m_1,\ldots, m_n$ are left empty, see
Figure~\ref{fig:matrix}.

The rows and columns of $B$ are divided into $n$ blocks each, such 
that the $k$-th block of rows consists of $m_k$ rows, and similarly for
columns. Now $r_{(k,i,h,j)}$ designates the entry in row $(k,i)$, that
is, in the $i$-th row of the $k$-th block of rows, and in column
$(h,j)$, that is, in the $j$-th column of the $h$-th block of
columns. Since no element of $\calR$ has row and column index in the
same block, the positions of a block-diagonal matrix with blocks of
sizes $m_1,\ldots, m_n$ are left empty.

For $1\le k\le n$, let $I_k = \{(k,i) \mid 1\le i \le m_k\}$ and set
\begin{equation*}
  I = \bigcup_{k=1}^n I_k.
\end{equation*}
Then
\begin{equation*}
  I = \{(k,i)\mid 1\le k \le n, 1\le i \le m_k\}
\end{equation*} is the set of all possible row indices, or,
equivalently, column indices.

For $(k,i)\in I_k$, let $B[k,i]$ be
the set of all elements $r\in \calR$ which are either in row or in
column $(k,i)$ of $B$, that is,
\begin{equation}\label{eq:def-Bki}
  B[k,i] = \{r_{(k,i,h,j)} \mid (h,j)\in
  I\setminus I_k\}\cup \{r_{(h,j,k,i)} \mid (h,j)\in I\setminus I_k\}
\end{equation}

\begin{figure}[h]
 \begin{tikzpicture}

\newcommand{\Rect}[6]{ \draw[#1] (#2,#3) rectangle(#2+#4,#3-#5) {#6};
} \newcommand{\block}[5]{ \Rect{#1}{#2}{#3}{#4}{#4}{#5}; }

\block{fill=black!15}{2}{7}{7}{}

\block{fill=white}{2}{7}{0.67}{node[pos=.5] {$1$}}
\block{fill=white}{7.5}{1.5}{1.5}{node[pos=.5] {$n$}}

\Rect{fill=black!30}{2}{3.5}{7}{1}{};
\Rect{fill=black!30}{5.5}{7}{1}{7}{};

\Rect{fill=black!5, pattern=north east lines, pattern
  color=black!40}{2}{6}{7}{1.5}{};
\Rect{fill=black!5, pattern=north
east lines, pattern color=black!40}{3}{7}{1.5}{7}{};
%
\Rect{fill=black!20, pattern=north east lines,
  pattern color=black!40}{5.5}{6}{1}{1.5}{};
\Rect{fill=black!20,
pattern=north east lines, pattern color=black!40}{3}{3.5}{1.5}{1}{};

\draw[thick,dashed] (5.7,0) -- (5.7,7);
\draw[dashed, thick] (2,3.3) -- (9,3.3);
\draw[dash pattern={on 7pt off 2pt on 1pt off3pt}, thick] (2,5.5) -- (9,5.5);
\draw[dash pattern={on 7pt off 2pt on 1pt off 3pt}, thick] (3.5,0) -- (3.5,7);

\block{fill=white}{5.5}{3.5}{1}{node[pos=.5] {$k$}}
\block{fill=white}{3}{6}{1.5}{node[pos=.5] {$h$}}
\block{fill=white}{2.67}{6.335}{0.33}{}
\block{fill=white}{4.5}{4.5}{0.33}{}
\block{fill=white}{4.83}{4.17}{0.33}{}
\block{fill=white}{5.16}{3.84}{0.34}{}
\block{fill=white}{6.5}{2.5}{0.33}{}
\block{fill=white}{6.83}{2.17}{0.33}{}
\block{fill=white}{7.16}{1.84}{0.34}{}

\draw[decorate,decoration={brace,amplitude=2pt},xshift=-4pt,yshift=0pt]
(2,6.38) -- (2,6.95) node [black,midway,xshift=-0.6cm] {\footnotesize
$m_1$}; \draw [decorate,decoration={brace,amplitude=2pt,
},xshift=-4pt,yshift=0pt] (2,4.55) -- (2,5.95) node
[black,midway,xshift=-0.6cm] {\footnotesize $m_h$};
\draw[decorate,decoration={brace,amplitude=2pt},xshift=-4pt,yshift=0pt]
(2,2.55) -- (2,3.45) node [black,midway,xshift=-0.6cm] {\footnotesize
  $m_k$};
\draw[decorate,decoration={brace,amplitude=2pt},xshift=-4pt,yshift=0pt]
 (2,0.05) -- (2,1.48) node [black,midway,xshift=-0.6cm] {\footnotesize
 $m_n$};

\node (B2j) at (8,9) {$B[h,j]$};
\node (B2jn) at (3.5,7){};
\node (B2je) at (9,5.5) {};

\draw (B2j.south) edge[dash pattern={on 7pt off 2pt on 1pt off 3pt},
out=200,in=80,->] (B2jn);

\draw (B2j.south)edge[dash pattern={on 7pt off 2pt on 1pt off
  3pt},out=300,in=20,->] (B2je);

\node (Bki) at (10,8) {$B[k,i]$};

\node (Bkin) at (5.7,7) {};

\node (Bkie) at (9,3.3) {};

\draw (Bki.south) edge[dashed,out=200,in=90,->] (Bkin);

\draw (Bki.south) edge[dashed,out=300,in=10,->] (Bkie);



\node[draw, circle, inner sep=2] (i1) at (5.7,5.5) {};

\node[draw, circle, inner sep=2] (i2) at (3.5,3.3) {};

\node (r1) at (2,8) {\footnotesize $r(h,j,k,i)$};

\draw (r1.east) edge[dotted,thick,out=355,in=120,->] ([xshift=-3pt,
yshift=3pt]i1);

\node (r2) at (0,4) {\footnotesize $r(k,i,h,j)$}; \draw (r2.east)
edge[dotted,thick, out=0,in=150,->] ([xshift=-3pt, yshift=3pt]i2);

\end{tikzpicture}  
 \caption{
   Say the $k$-th region of $B$ consists of the positions with either 
   column index or row index in the $k$-th block. 
   Then the union of the entries in any $n-1$ different regions covers 
   $\calR$.
   A union of different $B[u,v]$, from which $B[k,i]$ and $B[h,j]$ for
   two different blocks $k\neq h$ are missing, however, does not cover
   $\calR$, because $r_{(k,i,h,j)}$ and $r_{(h,j,k,i)}$ are not included.
 }
 \label{fig:matrix}
\end{figure}

In order to construct $H\in \Int(D)$, we set
$s(x)=\prod_{i=0}^{\sigma-1}(x-s_i)$ and, for $(k,i) \in I$,

\begin{equation*}
  f_i^{(k)}(x) = \prod_{r \in B[k,i]} (x-r).
\end{equation*}

Then, let $S(x)\in D[x]$, and, for each $(k,i) \in I$,
$F_i^{(k)}(x) \in D[x]$ be monic polynomials such as we know to exist
by Lemma~\ref{lemma:replacements}: irreducible in $K[x]$, pairwise
non-associated in $K[x]$, with $\deg(S)=\deg(s)$ and
$\deg(F_i^{(k)})= \deg(f_i^{(k)})$, and such that, for every selection
of polynomials from among $s$ and $f_i^{(k)}$ for $(k,i) \in I$, the
product of the polynomials has the same fixed divisor as the modified
product in which $s$ has been replaced by $S$ and each $f_i^{(k)}$ by
$F_i^{(k)}$. Now, let
\begin{equation*}
G(x) = S(x)\! \prod_{(k,i) \in I}\!\! F_i^{(k)}(x)
\quad\textrm{and}\quad
H(x) = \frac{G(x)}{c}.
\end{equation*}

Second, we show that $\fixdiv(G(x))=cD$, which implies 
$H(x)\in \Int(D)$ and $\fixdiv(H(x))=1$.
Note that
\begin{equation}\label{eq:fixdiv=c}
 \fixdiv(G(x)) =
 \fixdiv\left(S(x) \prod_{(k,i) \in I} F_i^{(k)}(x)\right) =
 \fixdiv\left( s(x) \prod_{(k,i)\in I}f_i^{(k)}\right) =
 \fixdiv\left(\prod_{i=0}^{\sigma - 1}(x-s_i)\prod_{r\in \calR}(x-r)^2\right).
\end{equation}

By construction, 
the multiset $\calR \uplus \calR \uplus \calS$ contains a complete
system of residues modulo $P$, and the residue class modulo $P$
of $s_1\in\calS$ occurs only once among the elements of 
$\calR \uplus \calR \uplus \calS$.
Equation~\eqref{eq:fixdiv=c} and
Lemma~\ref{lemma:fix-div-val}, applied to $Q=P$ and
$\calT = \calR \uplus \calR \uplus \calS$, $e=1$, and $z=s_1$, together
imply that
\begin{equation*}
  \val_P\left(\fixdiv\left(S(x) \prod_{(k,i) \in I}
      F_i^{(k)}(x) \right)\right) = 1
\end{equation*} 
One can argue similarly for $Q_i$, $1\le i \le t$:
The
multiset $\calR \uplus \calR \uplus \calS$ contains $e_i$ disjoint
complete systems of residues modulo $Q_i$ in which the respective
representatives of the same residue class in different systems are
congruent modulo $Q_i^{2}$. 
No more than $e_i$
elements of $\calR \uplus \calR \uplus \calS$ are congruent $1$ modulo
$Q_i$, and these $e_i$ elements are all in the same residue class modulo
$Q_i^{2}$.
By Lemma~\ref{lemma:fix-div-val}, applied to $Q=Q_i$,
$\calT=\calR \uplus \calR \uplus \calS$, $e=e_i$ and $z=1$, and
Equation~\eqref{eq:fixdiv=c}, it follows that
\begin{equation}
  \val_{Q_i}\left(\fixdiv\left(S(x) \prod_{(k,i) \in I}
      F_i^{(k)}(x) \right)\right) = e_i
\end{equation}
for $1\le i \le t$.  Since $\calR \uplus \calR \uplus \calS$ does not
contain a complete system of residues modulo any prime ideal other
than $P$, $Q_1$, \ldots, $Q_t$, we conclude (by
Lemma~\ref{lemma:fix-div-val}) that
\begin{equation*}
\fixdiv(G(x)) =
  \fixdiv\left(S(x) \prod_{(k,i) \in I}
    F_i^{(k)}(x)\right) =P Q_1^{e_1} \cdots Q_t^{e_t} = cD.
\end{equation*}
This shows $H(x)\in \Int(D)$ and $\fixdiv(H(x))=1$.

Third, we prove that the essentially different factorizations of $H(x)$ 
into irreducibles in $\Int(D)$ are given by:
\begin{equation}\label{eq:all-fact-of-H}
  H(x) =  F_1^{(h)}(x)\cdots F_{m_h}^{(h)}(x)\cdot\frac{S(x)\prod_{(k,i)\in
      I\setminus I_h} F_i^{(k)}(x)}{c}
\end{equation}
where $1 \le h \le n$.

It follows from the properties of $\calR$ and $\calS$ that the
polynomial $s(x)$ is indispensable for the prime ideals $P$ and $Q_1$,
\ldots, $Q_t$ (among the polynomials $s(x)$ and $f_i^{(k)}$ for
$(k,i)\in I$). This further implies that the polynomial $S(x)$ is
indispensable for the prime ideals $P$ and $Q_1$, \ldots, $Q_t$ (among
the polynomials $S(x)$ and $F_i^{(k)}$ for $(k,i)\in I$),
cf.~Remark~\ref{remark:indispensable-replacements}.

Thus, by Lemma~\ref{lemma:factorizations}, the essentially different
factorizations of $H(x)$ into irreducibles in $\Int(D)$ are given by:
\begin{equation}\label{eq:all-factorizations}
 H(x) =
\frac{S(x)\prod_{(k,i)\in J} F_i^{(k)}(x)}{c} \prod_{(h,j)\in
I\setminus J}F_j^{(h)}(x)
\end{equation} where $J\subseteq I$ is minimal such that
$\fixdiv\left(S(x)\prod_{(k,i)\in J}F_i^{(k)}(x)\right) = cD$.

Since $\val_{Q_i}(\fixdiv(S(x))) = e_i$ by
Lemma~\ref{lemma:fix-div-val}, the possible choices for
$J \subseteq I$ only depend on the prime ideal $P$. For a subset
$J\subseteq I$, let $\calB_J = \biguplus_{(k,i)\in J} B[k,i]$.  Then
\begin{equation}\label{eq:search-J1}
\fixdiv\left( S(x)\prod_{(k,i)\in J} F_i^{(k)}(x) \right) =
\fixdiv\left( \prod_{r\in \calS} (x-r) \prod_{r\in \calB_J} (x-r) \right)
\end{equation}
and it follows from Lemma~\ref{lemma:fix-div-val} that the fixed
divisor in Equation~\eqref{eq:search-J1} is equal $cD$ if and only if
$\calS \uplus \calB_J$ contains a complete set of residues modulo $P$
which is in turn equivalent to $\calR \subseteq \calB_J$. This is the
case if and only if there exists $1\le h \le n$ with
$I\setminus I_h \subseteq J$.

Therefore, $J\subseteq I$ is minimal with
$\fixdiv\left(S(x)\prod_{(k,i)\in J}F_i^{(k)}(x)\right) = cD$ if and
only if $J = I\setminus I_h$ for some $1\le h\le n$. 
Hence, the essentially different factorizations of $H(x)$, given by
Equation~\eqref{eq:all-factorizations}, are precisely the $n$ 
essentially different factorizations stated in
Equation~\eqref{eq:all-fact-of-H}, which are of lengths 
$m_1+1$, \ldots, $m_n+1$.
\end{proof}

\begin{corollary}\label{cor:kain}
  Let $D$ be a Dedekind domain with infinitely many maximal ideals,
  all of them of finite index.

  Then every finite subset of $\N\setminus \{1\}$ is the set of
  lengths of a polynomial in $\Int(D)$.
\end{corollary}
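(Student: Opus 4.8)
The plan is to read this off directly from Theorem~\ref{thm:prescribed-lengths} by a simple change of variables. Given a finite set $L \subseteq \N\setminus\{1\}$, I would first list its elements in increasing order, say $L = \{l_1,\ldots,l_n\}$ with $2 \le l_1 < l_2 < \cdots < l_n$, and then set $m_i = l_i - 1$ for $1 \le i \le n$. Since every $l_i \ge 2$, the resulting integers satisfy $1 \le m_1 < m_2 < \cdots < m_n$, so they meet the hypotheses of Theorem~\ref{thm:prescribed-lengths}.

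Applying the theorem to these $m_i$ produces a polynomial $H \in \Int(D)$ that has exactly $n$ essentially different factorizations into irreducibles, of lengths $m_1+1,\ldots,m_n+1$, that is, of lengths $l_1,\ldots,l_n$. Because the theorem asserts that these are \emph{all} the factorizations of $H$, the set of lengths of $H$ contains no value outside $\{l_1,\ldots,l_n\}$; and because the $l_i$ were chosen distinct, each of them is actually realized. Hence the set of lengths of $H$ is exactly $L$, which is what we want.

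The step that carries all the weight is of course Theorem~\ref{thm:prescribed-lengths} itself; the corollary adds nothing beyond the elementary observation that prescribing factorization lengths $m_i+1$ with $m_i \ge 1$ is the same as prescribing an arbitrary finite set of admissible lengths, namely lengths $\ge 2$. I therefore expect no genuine obstacle. The only point worth a word is the degenerate case $n=1$, i.e.\ a one-element set of lengths, which is already covered by the $n=1$ case treated at the start of the proof of Theorem~\ref{thm:prescribed-lengths} (where $H(x)=x^{l_1}$ has a unique factorization of length $l_1$).
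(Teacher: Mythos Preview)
Your proposal is correct and matches the paper's approach: the paper states the corollary immediately after Theorem~\ref{thm:prescribed-lengths} with no proof at all, treating it as an obvious consequence, which is precisely the reduction you spell out. Your handling of the translation $m_i = l_i - 1$ and the one-element case is exactly what is needed.
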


\begin{remark}
  Kainrath~\cite[Theorem~1]{Kainrath:1999:FKM} proved a similar
  result as Corollary~\ref{cor:kain} for Krull monoids $H$ with
  infinite class group in which every divisor class contains a prime
  divisor. In his proof, he uses transfer mechanisms.

  Corollary~\ref{cor:not-transfer-Krull} in the following section will
  show that this technique is not applicable to the proof of either
  Theorem~\ref{thm:prescribed-lengths} or Corollary~\ref{cor:kain}.
\end{remark}

\section{Not a transfer Krull domain}
\label{sec:transfer}

In this section we show that if $D$ is a Dedekind domain
with infinitely many maximal ideals, all of finite index, 
then there does not exist a transfer homomorphism from
the multiplicative monoid $\Int(D)\setminus \{0\}$ to a block monoid.
In the terminology introduced by 
Geroldinger~\cite{Geroldinger:2016:sets-of-lengths}, this means, 
$\Int(D)$ is not a \emph{transfer Krull domain}. 

We refer to~\cite[Definitions~2.5.5
\&~3.2.1]{GeroldingerHalter:2006:NUF} for the definition of a block
monoid and a transfer homomorphism, respectively. So far, there is
only a small list of examples of naturally occurring rings $R$ for
which it has been shown that there is no transfer homomorphism from
$R\setminus\{0\}$ to a block monoid,
see~\cite{FanTringali:2017:power-monoids,
  GeroldingerSchmidZhong:2017:sets-lengths,
  GeroldingerSchwab:2017:ucfp}.

In a block monoid, the lengths of factorizations of elements of the form 
$c\cdot d$ with $c$, $d$ irreducible, $c$ fixed, are bounded by a constant
depending only on $c$, cf.~\cite[Lemma~6.4.4]{GeroldingerHalter:2006:NUF}.
More generally, every monoid
admitting a transfer homomorphism to a block monoid has this
property; see \cite[Proposition~3.2.3]{GeroldingerHalter:2006:NUF}.

We now demonstrate for the irreducible element $c=x$ in $\Int(D)$ 
that the lengths of factorizations of elements of the form $c\cdot d$ 
with $d$ irreducible in $\Int(D)$ are not bounded. 
We infer from this that there does not exist a transfer homomorphism 
from the multiplicative monoid $\Int(D)\setminus \{0\}$ to a block monoid.

\begin{theorem}
  Let $D$ be a Dedekind domain with infinitely many maximal ideals,
  all of them of finite index.

  Then for every $n \geq 1$ there exist irreducible elements
  $H, G_1, \ldots, G_{n + 1}$ in $\Int(D)$ such that
  \[xH(x) = G_1(x) \cdots G_{n+1}(x).\]
\end{theorem}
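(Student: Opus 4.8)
The plan is to construct, for each given $n\ge 1$, a polynomial $xH(x)$ that admits two essentially different factorizations: one of length $2$ (namely $x$ times the irreducible $H$) and one of length $n+1$. The natural strategy is to reuse the machinery of Section~\ref{sec:proof}, since Theorem~\ref{thm:prescribed-lengths} already shows how to manufacture polynomials in $\Int(D)$ with prescribed sets of factorization lengths. Concretely, I would aim to build $H\in\Int(D)$ of the shape $H=\frac{\prod_{i\in I}f_i}{c}$ with $\fixdiv\bigl(\prod_{i\in I}f_i\bigr)=cD$, arranged so that the product $x\cdot H$ has exactly the two factorization lengths $2$ and $n+1$. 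Here $x$ plays the role of the distinguished irreducible factor $c$ in the block-monoid boundedness criterion cited from \cite[Lemma~6.4.4]{GeroldingerHalter:2006:NUF}: once we exhibit, for every $n$, an irreducible $H$ such that $xH$ factors into $n+1$ irreducibles, the lengths of factorizations of $x\cdot(\text{irreducible})$ are unbounded, contradicting the boundedness forced by any transfer homomorphism to a block monoid.

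\textbf{Key steps, in order.} First I would fix $n$ and specify an explicit target for $xH$, the cleanest choice being to apply the construction of Theorem~\ref{thm:prescribed-lengths} with a length multiset chosen so that one factorization has length $2$ and another has length $n+1$; equivalently, I would engineer $xH$ directly rather than $H$. Second, I would verify that the factor $x$ is itself irreducible in $\Int(D)$: since $x\in D[x]$ is irreducible in $K[x]$ and $\fixdiv(x)=(a\mid a\in D)=D$, irreducibility in $\Int(D)$ follows from Remark~\ref{rem:intd-fixdiv}.\ref{rem:irred-in-IntD}. Third, I would invoke Lemma~\ref{lemma:factorizations} (via the replacement polynomials of Lemma~\ref{lemma:replacements} and the fixed-divisor control of Lemma~\ref{lemma:fix-div-val}) to pin down \emph{all} essentially different factorizations of $xH$ and confirm that one of them reads $x\cdot H$ with $H$ irreducible, while another has $n+1$ irreducible factors $G_1,\ldots,G_{n+1}$. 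Finally, I would record the resulting identity $xH(x)=G_1(x)\cdots G_{n+1}(x)$ with all factors irreducible, which is exactly the statement.

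\textbf{The main obstacle} I anticipate is arranging the \emph{asymmetry} between the two required factorizations: I need $x$ to be genuinely available as a degree-one irreducible factor in one factorization (so that factorization has length $2$), while simultaneously forcing a second factorization in which the fixed divisor $cD$ must be absorbed by $n+1$ separate irreducible pieces. This is a constraint on how the root $0$ (the root of $x$) interacts with the prime $P$ dividing $cD$: I would need $0$ to lie in the relevant residue class modulo $P$ so that $x$ can serve as one of the indispensable factors, and I must simultaneously ensure, via the ``indispensable for $P$'' conditions of Lemma~\ref{lemma:factorizations}, that no factorization of length between $2$ and $n+1$ exists and that the length-$(n+1)$ factorization genuinely consists of irreducibles. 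Concretely, the delicate point is choosing the multisets $\calR$, $\calS$ and the prime $P$ so that the minimal subsets $J$ realizing $\fixdiv=cD$ in Lemma~\ref{lemma:factorizations} come in exactly the two sizes corresponding to lengths $2$ and $n+1$; the congruence bookkeeping here mirrors but is simpler than the full matrix construction in the proof of Theorem~\ref{thm:prescribed-lengths}, and I would expect to lift that construction with the length multiset $\{1,n\}$ (giving lengths $2$ and $n+1$) and then identify the length-$2$ factor with $x$.
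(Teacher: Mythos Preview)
Your plan has a genuine gap at precisely the point you flag as ``the main obstacle,'' and the fix is not a tweak of Theorem~\ref{thm:prescribed-lengths} but a different idea.

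If you literally run the construction of Theorem~\ref{thm:prescribed-lengths} with length multiset $\{1,n\}$, the factor that appears alone in the length-$2$ factorization is $F_1^{(1)}$, and by construction $\deg F_1^{(1)}=|B[1,1]|=2(m-m_1)=2n$. There is no way to ``identify'' this with $x$; Lemma~\ref{lemma:replacements} preserves degrees. More fundamentally, your proposal works with a \emph{single} prime $P$ dividing $cD$. In that setting, if you want $H=\tfrac{F\prod_{i=1}^n(x-a_i)}{c}$ to be irreducible via Lemma~\ref{lemma:factorizations}, each $(x-a_i)$ must be indispensable for $P$, so the $a_i$ occupy $n$ distinct residue classes modulo $P$ not met by the roots of $F$. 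But then the roots of $xF$ miss at least $n-1$ of those classes, so $P\nmid\fixdiv(xF)$ and $\tfrac{xF}{c}\notin\Int(D)$: there is no candidate for the first factor $G_1$ of the long factorization. In short, with one prime the single root $0$ cannot do the work of $n$ indispensable linear factors.

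The paper's proof avoids this by using $n$ \emph{distinct} primes $P_1,\ldots,P_n$ (each with $\val_{P_i}(c)=1$) and choosing $a_i\equiv 0\bmod P_i$ but $a_i\equiv 1\bmod P_j$ for $j\ne i$, while the roots of $F$ cover every nonzero residue class modulo each $P_i$. Then $(x-a_i)$ is indispensable only for $P_i$, so all $n$ of them are forced into $H$ and $H$ is irreducible; yet for each $P_i$ the class of $0$ is the only one missing from $F$, so the single factor $x$ can replace all the $(x-a_i)$ at once, giving $G=\tfrac{xF}{c}\in\Int(D)$ and the identity $xH=G\cdot(x-a_1)\cdots(x-a_n)$. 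This multi-prime ``one root substitutes for $n$'' mechanism is the missing idea; it is not visible in the matrix construction of Theorem~\ref{thm:prescribed-lengths}, which uses one large prime and high-degree blocks.
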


\begin{proof}
  Let $P_1,\ldots, P_n$ be distinct maximal ideals of $D$, none of
  them of index $2$.  By $\val_i$ we denote the discrete valuation
  associated to $P_i$.  Let $c\in D$ such that $\val_i(c)=1$ for
  $i=1,\ldots,n$, and $c$ is not contained in any maximal ideal of $D$
  of index $2$.  Say the prime factorization of $cD$ is
  $cD=P_1\cdot\ldots\cdot P_n\cdot Q_1^{e_1}\cdot\ldots\cdot
  Q_m^{e_m}$, and define
  \[ N = \max\left(\{\norm{P_i}\mid 1\le i\le n\} \cup 
  \{e_i\norm{Q_i}\mid 1\le i\le m\}\right). \]

  Let $\calP=\{P_i\mid 1\le i\le n\}$,
  $\calPone=\{Q_i\mid 1\le i\le m\}$, and
\[\calPtwo=\{Q\in \maxspec{D}\setminus (\calP\cup \calPone)\mid
  \norm{Q}\le N+n\}.\]

Let $\calR$ be a subset of $D$ of order $N$ with the following
properties (which can be realized by the Chinese Remainder Theorem):
\begin{enumerate}
\item $\calR$ contains an element
  $r_0\in\left(\bigcap_{i=1}^n P_i\right)\cap \left(\bigcap_{i=1}^m
    Q_i^2\right)$.
\item No element of $\calR$ other than $r_0$ is in any $P_i\in \calP$.
\item For each $P_i\in\calP$, $\calR$ contains a complete system of
  residues modulo $P_i$.
\item For each $Q_i \in\calPone$, $\calR$ contains $e_i$ disjoint
  complete systems of residues, in which the respective
  representatives of the same residue class in different systems are
  congruent modulo $Q_i^{2}$;
\item No more than $e_i$ elements of $\calR$ are in $Q_i$.
\item For all $Q\in\calPtwo$, all elements of $\calR$ are contained in $Q$.
\end{enumerate}

We set $\calB = \calR\setminus \{r_0\}$.

Also, let $a_1, \ldots,a_n\in D$ with the following properties (which,
again, can be realized by the Chinese Remainder Theorem):
\begin{enumerate}
\item For all $i=1,\ldots, n$, $a_i \equiv 0 \mod P_i$.
\item For all $i=1,\ldots, n$, $a_i \equiv 1 \mod P_j$ for all $j\neq i$.
\item For all $Q\in \calP_1 $, $a_n\equiv 0 \mod Q^2$ and
  $a_i\equiv 1 \mod Q$ for all $1\le i < n$,
\item For all $Q\in \calP_2$ and all $1\le i\le n$, $a_i \equiv 0 \mod Q$.
\end{enumerate}

Let $f(x) = \prod_{b\in \calB}(x-b)$ and let $F(x)\in D[x]$ be monic
and irreducible in $K[x]$ such that for every selection of polynomials
from the set $\{x,f\} \cup \{(x-a_i) \mid 1 \leq i \leq n\}$ the
product of the polynomials has the same fixed divisor as the modified
product in which $f$ has been replaced by $F$, as in Lemma
\ref{lemma:replacements}.

Lemmas~\ref{lemma:replacements} and~\ref{lemma:fix-div-val}, applied
to $\calT=\calB \cup \{a_1,\ldots, a_n\}$ and each of the prime ideals
in $\calP \cup \calPone$, imply that
\begin{equation*}
  \fixdiv\left( F(x)\prod_{i=1}^n(x-a_i)\right) =
  \fixdiv\left( f(x)\prod_{i=1}^n(x-a_i)\right) = cD.
\end{equation*}
Similarly, Lemmas~\ref{lemma:replacements} and~\ref{lemma:fix-div-val},
applied to $\calT = \calB \cup \{0\}$ and each of the prime ideals in
$\calP \cup \calPone$, imply that
\begin{equation*}
  \fixdiv\left( xF(x)\right)
  =\fixdiv\left(xf(x)\right)
   = cD.
\end{equation*}

We set
\begin{equation*}
  H(x) =  \frac{F(x)\prod^{n}_{j=1}(x-a_{j})}{c}\quad\textrm{ and }\quad G(x) =
  \frac{xF(x)}{c}.
\end{equation*}

Then $G(x)$ and $H(x)$ are elements of $\Int(D)$ with
$\fixdiv(G(x)) = \fixdiv(H(x)) = 1$ such that
\begin{equation*}
  xH(x) = G(x)(x-a_1) \cdots (x-a_n).
\end{equation*}

It remains to show that $H(x)$ and $G(x)$ are irreducible in
$\Int(D)$.  Observe that, among the polynomials $x$ and $f(x)$, $x$ is
indispensable for all $P \in \calP$ and $f(x)$ is indispensable for
all $P\in \calP$ and all $Q\in \calPone$. It follows that, among the
polynomials $x$ and $F(x)$, $x$ is indispensable for all $P \in \calP$
and $F(x)$ is indispensable for all $P\in \calP$ and all
$Q\in \calPone$, cf.~Remark~\ref{remark:indispensable-replacements}.
Hence $G(x)$ is irreducible in $\Int(D)$ by
Lemma~\ref{lemma:factorizations}.

Finally, again by Lemma~\ref{lemma:factorizations} and
Remark~\ref{remark:indispensable-replacements}, $H(x)$ is irreducible
in $\Int(D)$, since
\begin{enumerate}
\item $F(x)$ and $x-a_i$ are indispensable for $P_i$ ($1\le i\le n$)
\item $F(x)$ is indispensable for $Q_i$ ($1\le i\le m$)
\end{enumerate} 
among the polynomials $F(x)$ and $x-a_j$ with $1\le j\le n$.
\end{proof}

As discussed at the beginning of this section, we may conclude:

\begin{corollary}\label{cor:not-transfer-Krull}
  Let $D$ be a Dedekind domain with infinitely many maximal ideals,
  all of them of finite index.
 
  Then there does not exist a transfer homomorphism from the multiplicative
  monoid\/ $\Int(D)\setminus\{0\}$ to a block monoid; in other words:
  $\Int(D)$ is not a transfer Krull domain.
\end{corollary}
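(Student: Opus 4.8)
The plan is to produce a single integer-valued polynomial that factors in two essentially different ways, once as a product of two irreducibles and once as a product of $n+1$ irreducibles; since one of the two factors in the short factorization is the fixed irreducible $x$, this shows that the lengths of factorizations of $x\cdot d$ with $d$ irreducible are unbounded, which (as recalled at the start of the section) forbids a transfer homomorphism to a block monoid. Concretely, I would assemble a monic polynomial of the shape $x\,F(x)\prod_{i=1}^{n}(x-a_i)$ in $D[x]$ together with a non-unit $c$, and divide by $c$; the resulting element of $\Int(D)$ can be regrouped either as $x\cdot\bigl(F(x)\prod_i(x-a_i)/c\bigr)$, i.e.\ as $x\cdot H(x)$, or as $\bigl(xF(x)/c\bigr)\cdot\prod_i(x-a_i)$, i.e.\ as $G(x)(x-a_1)\cdots(x-a_n)$. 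The conceptual point is an asymmetry between the two groupings: the single factor $x$ can absorb the contribution of all $n$ primes $P_1,\dots,P_n$ dividing $cD$ simultaneously, whereas in the other grouping this contribution must be parcelled out among the $n$ distinct linear factors $x-a_i$, one prime per factor.

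For the construction I would pick $n$ distinct maximal ideals $P_1,\dots,P_n$, none of index $2$, and a non-unit $c$ with $\val_{P_i}(c)=1$ for each $i$ and $c$ outside every maximal ideal of index $2$, so that $cD=P_1\cdots P_n\,Q_1^{e_1}\cdots Q_m^{e_m}$. With $N=\max\bigl(\{\norm{P_i}\}\cup\{e_i\norm{Q_i}\}\bigr)$ I would use the Chinese Remainder Theorem to build a set $\calR\subseteq D$ of $N$ elements, with one distinguished element $r_0$ lying in every $P_i$ and in every $Q_i^2$, carrying prescribed complete residue systems modulo the $P_i$ and the required $e_i$ systems modulo each $Q_i$, and lying inside the finitely many ``dangerous'' primes of small index that do not divide $cD$; and I would choose shift elements $a_1,\dots,a_n$ with $a_i\equiv 0\bmod P_i$, $a_i\equiv 1\bmod P_j$ for $j\neq i$, and suitable residues modulo the $Q_i$ and the dangerous primes. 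Writing $\calB=\calR\setminus\{r_0\}$ and $f(x)=\prod_{b\in\calB}(x-b)$, I would replace $f$ by a monic polynomial $F$ irreducible in $K[x]$ using Lemma~\ref{lemma:replacements}, which preserves the fixed divisor of every subproduct, and then apply Lemma~\ref{lemma:fix-div-val} prime by prime to obtain the two key identities
\[
  \fixdiv\bigl(xF(x)\bigr)=cD \qquad\text{and}\qquad
  \fixdiv\Bigl(F(x)\textstyle\prod_{i=1}^{n}(x-a_i)\Bigr)=cD .
\]
Setting $G=xF/c$ and $H=F\prod_i(x-a_i)/c$, these give $G,H\in\Int(D)$ with fixed divisor $1$ and the asserted equality $xH(x)=G(x)(x-a_1)\cdots(x-a_n)$.

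It then remains to verify that all $n+1$ factors on the right, as well as $H$, are irreducible. The factors $x$ and $x-a_i$ are linear and image primitive, hence irreducible in $\Int(D)$ by Remark~\ref{rem:intd-fixdiv}.\ref{rem:irred-in-IntD}. For $G$ and $H$ I would invoke Lemma~\ref{lemma:factorizations}. The congruence data is designed so that $F$ is indispensable for every prime in $\{P_1,\dots,P_n,Q_1,\dots,Q_m\}$ at once, so the intersection hypothesis of that lemma is satisfied; moreover each $x-a_i$ is indispensable for $P_i$ (witnessed by $z=a_i$, using $a_j\equiv 1\bmod P_i$ for $j\neq i$ and $r_0$ being the only root in $P_i$). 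In each case the indispensable polynomials together with the minimal set $J_1$ needed to realize the full fixed divisor $cD$ already exhaust the index set, so the ``split-off'' part $J_2$ is empty; by Lemma~\ref{lemma:factorizations} this means $G$ and $H$ have no nontrivial factorization and are irreducible.

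I expect the main obstacle to be the simultaneous bookkeeping in the Chinese Remainder Theorem step: the residues of $\calR$ and of the $a_i$ must be arranged so that both products $xf$ and $f\prod_i(x-a_i)$ have fixed divisor \emph{exactly} $cD$. Forcing $cD$ to divide the fixed divisor requires each prime of $cD$ to see the right number of complete residue systems, with the $Q_i^2$-refinement of Lemma~\ref{lemma:fix-div-val} pinning down the precise valuations $e_i$; forbidding any extra prime divisor requires driving all roots into the dangerous small-index primes so that no complete residue system can form there, while the degree bound of Remark~\ref{rem:intd-fixdiv}.\ref{rem:intd-fixdiv-index} disposes automatically of the primes of large index. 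The hypotheses excluding index-$2$ primes are there precisely to prevent a two-element complete residue system from creeping in at a small prime. Once these congruences are shown to be consistent, the indispensability statements that drive the irreducibility of $G$ and $H$ come from the very same residue data.
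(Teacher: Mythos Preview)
Your proposal is correct and follows the paper's own argument essentially verbatim: the same choice of $P_1,\dots,P_n$ and $c$, the same CRT-built set $\calR$ with distinguished element $r_0$ and the set $\calB=\calR\setminus\{r_0\}$, the same shift elements $a_i$, the replacement of $f=\prod_{b\in\calB}(x-b)$ by an irreducible $F$ via Lemma~\ref{lemma:replacements}, the two fixed-divisor identities, and the irreducibility of $G$ and $H$ via the indispensability criterion of Lemma~\ref{lemma:factorizations}. The only difference is presentational; the mathematical content coincides with the paper's proof of the theorem immediately preceding the corollary.
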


\bibliographystyle{plain}
\bibliography{bibliography}

\end{document}